\newtheorem{theorem}{Theorem}[section]
\newtheorem*{theorem*}{Theorem}
\newtheorem{corollary}[theorem]{Corollary}
\newtheorem{lemma}[theorem]{Lemma}
\newtheorem{proposition}[theorem]{Proposition}
\theoremstyle{definition}
\newcommand{\rr}{\mathbb{R}}
\newcommand{\nn}{\mathbb{N}}
\newcommand{\ee}{\varepsilon}
\begin{document}

\title{Szlenk and $w^*$-dentability indices of $C(K)$}
\author{R.M. Causey}
\begin{abstract} Given any compact, Hausdorff space $K$ and $1<p<\infty$, we compute the Szlenk and  $w^*$-dentability indices of the spaces $C(K)$ and $L_p(C(K))$.   We show that if $K$ is compact, Hausdorff, scattered, $CB(K)$ is the Cantor-Bendixson index of $K$, and $\xi$ is the minimum ordinal such that $CB(K)\leqslant \omega^\xi$, then $Sz(C(K))=\omega^\xi$ and  $Dz(C(K))=Sz(L_p(C(K)))= \omega^{1+\xi}.$   

\end{abstract}
\maketitle

\section{Definitions}

Two ordinal indices, the Szlenk index \cite{Szlenk} and $w^*$-dentability index, have been used to classify and study Asplund spaces.  These indices are distinct, but happen to coincide for a large class of spaces. Indeed, due to a result of H\'{a}jek and Schlumprecht \cite{HajekSchlumprecht}, the Szlenk and $w^*$-dentability indices coincide for those Banach spaces whose Szlenk index lies in the interval $[\omega^\omega, \omega_1]$.  Here, $\omega$ denotes the first infinite ordinal and $\omega_1$ is the first uncountable ordinal.   Since each index has applications to renorming theory, we seek to better understand the relationship between them.  Given a Banach space $X$, a $w^*$-compact subset $K$ of $X^*$, and $\ee>0$, we let $s_\ee(K)$ denote those $x^*\in K$ such that for every $w^*$-neighborhood $V$ of $x^*$, $\text{diam}(V\cap K)>\ee$.   We let $d_\ee(K)$ denote those $x^*\in K$ such that for every $w^*$-open slice $S$ containing $x^*$, $\text{diam}(S\cap K)>\ee$.   We recall that a $w^*$-open slice in $X^*$ is a subset of $X^*$ of the form $\{y^*\in X^*: \text{Re\ }y^*(x)>a\}$ for some $a\in \rr$ and $x\in X$.    Of course, $s_\ee(K)\subset d_\ee(K)$.  We define $$s^0_\ee(K)=d_\ee^0(K)=K,$$ $$s^{\xi+1}_\ee(K)=s_\ee(s^\xi_\ee(K)),\hspace{5mm} d^{\xi+1}_\ee(K)=d_\ee(d^\xi_\ee(K)),$$ and if $\xi$ is a limit ordinal, $$s^\xi_\ee(K)=\underset{\zeta<\xi}{\bigcap} s_\ee^\zeta(K), \hspace{5mm} d^\xi_\ee(K)=\underset{\zeta<\xi}{\bigcap} d_\ee^\zeta(K).$$ Note that for every $\ee>0$ and every ordinal $\xi$, $s_\ee^\xi(K), d_\ee^\xi(K)$ are $w^*$-compact, and $s_\ee^\xi(K)\subset d_\ee^\xi(K)$.  Moreover, if $K$ is convex, so is $d^\xi_\ee(K)$.   We let $Sz_\ee(K)=\min\{\xi: s^\xi_\ee(K)=\varnothing\}$ if this class is non-empty, and we write $Sz_\ee(K)=\infty$ otherwise.  We let $Sz(K)=\sup_{\ee>0}Sz_\ee(K)$, with the convention that this supremum is $\infty$ if $Sz_\ee(K)=\infty$ for some $\ee>0$.   We define $Dz_\ee(K)$, $Dz(K)$ similarly. Given a Banach space $X$, we let $Sz(X)=Sz(B_{X^*})$ and $Dz(X)=Dz(B_{X^*})$.  If $\Phi:X\to Y$ is an operator, we define $Sz(\Phi)=Sz(\Phi^* B_{Y^*})$.    The index $Sz(X)$ is the \emph{Szlenk index} of $X$, and $Dz(X)$ is the $w^*$-\emph{dentability index} of $X$.  We observe that $Sz(K)\leqslant Dz(K)$ for any $w^*$-compact $K$.  

Given a compact, Hausdorff space $K$, we let $K'$ denote the Cantor-Bendixson derivative of $K$.  That is, $K'$ consists of those points in $K$ which are not isolated in $K$. Note that $K'$ is closed in $K$, and so is compact, Hausdorff with its relative topology as long as it is non-empty.   We define $$K^0=K,$$ $$K^{\xi+1}=(K^\xi)',$$ and if $\xi$ is a limit ordinal, we let $$K^\xi=\underset{\zeta<\xi}{\bigcap} K^\zeta.$$   We let $CB(K)$ denote the Cantor-Bendixson index of $K$, which is the minimum ordinal $\xi$ such that $K^\xi=\varnothing$ if such an ordinal exists, and otherwise we write $CB(K)=\infty$. The space $K$ is said to be \emph{scattered} if $CB(K)$ is an ordinal.  A standard compactness argument yields that if $K$ is scattered, $CB(K)$ is a successor ordinal.  If $K$ is scattered, we let $\Gamma(K)$ denote the minimum ordinal $\xi$ such that $CB(K)\leqslant \omega^\xi$. Note that the inequality $CB(K)\leqslant \omega^\xi$ is strict except in the trivial case that $CB(K)=1$ (that is, when $K$ is finite), since $CB(K)$ cannot be a limit ordinal, while $\omega^\xi$ is a limit ordinal for any $\xi>0$.   If $K$ is not scattered, we let $\Gamma(K)=\infty$.   We agree to the convention that $\omega \infty=\infty$.

Our proofs work for both the real and complex scalars.  In what follows, $C(K)$ is the Banach space of continuous, scalar-valued functions defined on the compact, Hausdorff space $K$.  Given a Banach space $X$ and $1<p<\infty$, $L_p(X)$ denotes the space of (equivalence classes of) Bochner-integrable, $X$-valued functions defined on $[0,1]$ with Lebesgue measure.  

\begin{theorem} For any compact, Hausdorff $K$ and any $1<p<\infty$, $$Sz(C(K))=\Gamma(K),$$ $$Dz(C(K))=Sz(L_p(C(K)))=\omega \Gamma(K).$$   

\label{main theorem}
\end{theorem}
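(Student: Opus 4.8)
The plan is to prove the inequalities $\leqslant$ and $\geqslant$ for each of the three indices in parallel, after first reducing to the scattered case. It is classical that $C(K)$ is Asplund exactly when $K$ is scattered, and, since $C(K)$ is complemented in $L_p(C(K))$, the same dichotomy holds for $L_p(C(K))$. As a Banach space has an ordinal (rather than $\infty$) Szlenk and $w^*$-dentability index precisely when it is Asplund, the non-scattered case is exactly the case where all three quantities are $\infty$, matching the right-hand sides (with $\Gamma(K)=\omega\Gamma(K)=\infty$). Assume henceforth that $K$ is scattered, write $CB(K)=\gamma+1$, and let $\xi$ be least with $\gamma<\omega^\xi$, so that $\Gamma(K)=\omega^\xi$; by minimality $\xi$ is either $0$ or a successor. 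Since $K^{(\gamma)}$ is compact with empty derived set it is finite, so after partitioning $K$ into finitely many clopen pieces --- which changes none of the three indices --- we may assume $K^{(\gamma)}=\{t\}$.

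\emph{Lower bounds.} By transfinite recursion on $\gamma$ one embeds the ordinal segment $[0,\omega^\gamma]$ as a closed subset of $K$: a point of $K^{(\gamma)}$ is non-isolated in $K^{(\beta)}$ for every $\beta<\gamma$, so one may choose around it pairwise disjoint clopen neighbourhoods ``escaping to it'', each meeting the relevant lower derived set, and splice in closed copies of $[0,\omega^\beta]$ supplied by the recursion. The restriction map is a quotient map $C(K)\twoheadrightarrow C([0,\omega^\gamma])$, and applying $L_p(\cdot)$ gives $L_p(C(K))\twoheadrightarrow L_p(C([0,\omega^\gamma]))$; as $Sz$ and $Dz$ do not increase under quotients, it remains to prove (the $K$ finite case being a direct computation) that $Sz(C([0,\omega^\gamma]))\geqslant\omega^\xi$ and $Dz(C([0,\omega^\gamma])),\,Sz(L_p(C([0,\omega^\gamma])))\geqslant\omega^{1+\xi}$. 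These are obtained by explicit constructions, as in the classical countable-ordinal case: for $Sz$ one produces, for each $\zeta<\omega^\xi$ and fixed $\ee$, an element of $s_\ee^\zeta(B_{C([0,\omega^\gamma])^*})$ given by a normalized finitely supported combination of Dirac masses $\delta_s$ with $s$ running along a carefully chosen chain in the natural tree describing $[0,\omega^\gamma]$, the Cantor--Bendixson structure guaranteeing it survives $\zeta$ many $w^*$-Szlenk derivations. For $Dz$ and for the Bochner space one runs the same construction but replaces single Dirac masses by long convex averages of them; because the $C(K)$-norm is ``flat'' these averages stay $\ee$-separated even after passing to $w^*$-slices (respectively after integrating the extra Bochner coordinate), and this is precisely what yields the additional factor of $\omega$, i.e.\ $\omega^{1+\xi}$ rather than $\omega^\xi$.

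\emph{Upper bounds.} One argues by transfinite induction on $\xi$; the base case $\xi=0$ ($K$ finite) is the direct computation $Sz(C(K))=1$, $Dz(C(K))=Sz(L_p(C(K)))=\omega$, and since $\xi$ is never a limit only the successor case $\xi=\eta+1$ remains. There $\gamma<\omega^\eta\cdot\omega$, hence $CB(K)\leqslant\omega^\eta m+1$ for some $m\inn$, and one proceeds by an inner induction on $m$. The crux is a quantitative estimate on the transfinite $w^*$-Szlenk derivations of $B_{C(K)^*}=B_{\ell_1(K)}$, proved by a transfinite induction whose statement controls, in terms of $\zeta$ and $\ee$, how much $\ell_1$-mass an element of $s_\ee^\zeta(B_{\ell_1(K)})$ may place on points of small Cantor--Bendixson rank; it is calibrated so that once $\zeta=\omega^\xi$ and $CB(K)\leqslant\omega^\xi$ the derivative is empty for every $\ee$. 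The delicate feature --- already visible in the fact that $Sz(C(K))$ jumps from $\omega$ to $\omega^2$ exactly when $CB(K)$ passes from finite to $\omega+1$ --- is that finitely many Cantor--Bendixson levels are ``cheap'' but their accumulation costs a full factor of $\omega$; matching this with the arithmetic $\omega\cdot\omega^\eta=\omega^{\eta+1}$ is what makes the induction close and delivers $Sz(C(K))\leqslant\omega^\xi=\Gamma(K)$ and $Dz(C(K))\leqslant\omega^{1+\xi}=\omega\Gamma(K)$. This bookkeeping is organized most cleanly inside the framework of Szlenk indices of operators and the generalized Schreier families $\mathcal{S}_\xi$: the point is that $CB(K)\leqslant\omega^\xi$ precludes the $\mathcal{S}_\xi$-indexed weakly null trees in $C(K)$ that would witness $Sz(C(K))>\omega^\xi$. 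For $L_p(C(K))$ one invokes the known description of the $w^*$-Szlenk derivations of a Lebesgue--Bochner space $L_p(X)$ in terms of a measurable field of derivations of $X$; feeding in the $C(K)$-estimate above yields the bound $\omega^{1+\xi}$ and closes that induction in step with the others.

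\emph{Main obstacle.} Essentially all of the difficulty is in the upper bound. The naive device --- peeling off a Cantor--Bendixson derivative --- fails badly: it only produces a short exact sequence $0\to C_0(K\setminus K^{(\zeta)})\to C(K)\to C(K^{(\zeta)})\to0$ whose quotient is in general no simpler than $C(K)$ and whose kernel, for non-metrizable $K$, is a transfinite directed union of $C(L)$'s along which the Szlenk index can jump, so neither a three-space argument nor a $c_0$-sum permanence property can be invoked. One is therefore forced to control the $w^*$-Szlenk and $w^*$-dentability derivations by hand, with constants uniform enough to survive $\omega^\xi$ iterations, and to do so simultaneously for $C(K)$, for its $w^*$-dentability index, for the Lebesgue--Bochner spaces, and for non-metrizable $K$; reconciling all of these is the heart of the proof.
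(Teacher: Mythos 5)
The decisive gap is in your lower bound. It rests on the claim that every scattered compact Hausdorff $K$ with $CB(K)=\gamma+1$ contains a closed homeomorphic copy of $[0,\omega^\gamma]$ (so that $C(K)$ maps onto $C([0,\omega^\gamma])$ by restriction). The recursion you sketch requires, at each splicing step, a nontrivial sequence of points of the appropriate derived set converging to a prescribed point of higher rank, around which pairwise disjoint clopen sets ``escaping to it'' can be chosen; in a non-metrizable scattered compactum such sequences need not exist (the point $\omega_1\in[0,\omega_1]$ lies in every derived set below the top but is the limit of no nontrivial sequence), so the construction cannot be carried out as described. Worse, the conclusion itself is not a theorem of ZFC: under $\diamondsuit$, the one-point compactification $K$ of an Ostaszewski space is a hereditarily separable scattered compactum with $CB(K)=\omega_1+1$, while $[0,\omega^{\omega_1}]=[0,\omega_1]$ is nonseparable, so it neither embeds into $K$ nor is a continuous image of it; yet the theorem's lower bound must still hold for this $K$. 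This is exactly why the paper makes no reduction to ordinal intervals: Lemma \ref{lower est} proves directly, by a neighborhood/net argument with finitely supported measures, that $B_{\ell_1(K^\xi)}\subset s_\ee^\xi(B_{\ell_1(K)})\cap d_\ee^{\omega\xi}(B_{\ell_1(K)})$, using only that points of $K^{\xi+1}$ are non-isolated in $K^\xi$; and the lower bound for $Sz(L_p(C(K)))$ is then obtained from $Dz(C(K))\leqslant Sz(L_p(C(K)))$ (Proposition \ref{szlenk facts}(vi)) rather than from a direct Bochner-space construction.

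Your upper-bound outline has the right shape (outer induction on $\xi$, inner induction on $m$ where $CB(K)\leqslant\omega^\eta m+1$, quantitative control of the mass placed on points of low Cantor--Bendixson rank), and this matches the paper's scheme, but the tools you invoke to close it are not available. There is no known description of the $w^*$-Szlenk derivations of $L_p(X)$ by a ``measurable field of derivations of $X$''; the paper instead uses that $\ell_1(K)$ has the Radon--Nikodym property to identify $L_p(C(K))^*=L_q(\ell_1(K))$ and runs the induction there by hand. Likewise, Schreier-family/weakly null tree characterizations of $Sz\leqslant\omega^\xi$ are separable-space technology and cannot simply be cited for arbitrary $K$. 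What actually closes the induction in the paper is the triple of statements $A'_\xi, B'_\xi, C'_\xi$ phrased via Szlenk indices of the inclusion operators $\Phi_\gamma$, the quantitative slicing Lemma \ref{lemma1} for the quotient maps $\Phi^*_F$, and, for the inner induction on $m$, Lemma \ref{smooth} (derivations of sets of the form $L+A$, organized by Hessenberg sums) together with Proposition \ref{szlenk facts}(ix); some concrete mechanism of this kind is needed where your sketch says the ``bookkeeping'' makes the induction close.
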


For any ordinals $\xi, \zeta$ such that $\omega^\zeta\leqslant \xi<\omega^{\zeta+1}$, it is easy to see that $CB([0,\xi])=\zeta+1$.  Thus our results recover the known facts that if $\omega^{\omega^\zeta}\leqslant \xi <\omega^{\omega^{\zeta+1}}$, $Sz(C([0, \xi]))=\omega^{\zeta+1}$.   This was shown by Samuel \cite{Samuel} in the case that $\xi$ is countable, by Lancien and H\'{a}jek when $\xi<\omega_1\omega$, and by Brooker \cite{BrookerOrdinals} in the general case.  We also recover the values of $Dz([0, \xi])$ and $Sz(L_p(C([0, \xi])))$, which was shown for countable $\xi$ in \cite{HajekLancienProchazka}, and in the general case by Brooker \cite{BrookerOrdinals}.

\section{Preliminaries}

We collect a few facts concerning the Szlenk and $w^*$-dentability indices.  

\begin{proposition}Let $X$ be a Banach space.  \begin{enumerate}[(i)]\item If $X$ is isomorphic to a subspace of $Y$, then $Sz(X)\leqslant Sz(Y)$ and $Dz(X)\leqslant Dz(Y)$.     \item $Sz(X)=1$ if and only if $\dim X<\infty$, and otherwise $Sz(X)\geqslant \omega$.  \item $Dz(X)=1$ if and only if $X=\{0\}$, and otherwise $Dz(X)\geqslant \omega$. \item $Sz(X)=\infty$ if and only if $Dz(X)=\infty$ if and only if $X$ fails to be Asplund.  \item For any Asplund space $X$, there exist ordinals $\xi, \zeta$ such that $Sz(X)=\omega^\xi$ and $Dz(X)=\omega^\zeta$. \item For any $1<p<\infty$, $Dz(X)\leqslant Sz(L_p(X))$. \item For any Banach space $Y$, $Sz(X\oplus Y)=\max\{Sz(X), Sz(Y)\}$.  \item If $Y$ is a Banach space and $\Phi:X\to Y$ is an isomorphic embedding, $Sz(\Phi)=Sz(X)$.  \item For any $K,L\subset X^*$ $w^*$-compact, $\ee>0$, and any ordinal $\xi$, if $K\subset L+\ee B_{X^*}$, then  $$s_{12\ee}^\xi(K)\subset s_{3\ee}^\xi(L)+\ee B_{X^*}.$$   \end{enumerate}

\label{szlenk facts}

\end{proposition}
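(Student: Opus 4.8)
Everything rests on two elementary properties of the derivations: \emph{homogeneity}, $s^\xi_\ee(tK)=t\,s^\xi_{\ee/t}(K)$ and $d^\xi_\ee(tK)=t\,d^\xi_{\ee/t}(K)$ for $t>0$, and \emph{monotonicity}, $K\subset L\Rightarrow s^\xi_\ee(K)\subset s^\xi_\ee(L)$ and $d^\xi_\ee(K)\subset d^\xi_\ee(L)$ (each by a quick transfinite induction). Together they yield \emph{renorming invariance} of both indices: if $a\|\cdot\|\leqslant|\cdot|\leqslant b\|\cdot\|$ on $X$ then the dual balls satisfy $aB_{X^*}\subset B_{(X,|\cdot|)^*}\subset bB_{X^*}$, so $Sz(X)=Sz(aB_{X^*})\leqslant Sz(B_{(X,|\cdot|)^*})\leqslant Sz(bB_{X^*})=Sz(X)$, and likewise for $Dz$. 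Granting this, (i) is the standard monotonicity of the indices under isomorphic embedding: after a renorming we may assume $X\subset Y$ isometrically, and a transfinite induction along the $w^*$-to-$w^*$ continuous restriction surjection $Y^*\twoheadrightarrow X^*$ gives the conclusion --- for the $Dz$ part one uses in addition that this map pulls $w^*$-open slices back to $w^*$-open slices and that the $d$-derivations of the convex set $B_{X^*}$ remain convex; I would cite this rather than reprove it. I would likewise invoke from the literature the structural facts (iv) (in essence Szlenk's characterisation of Asplund spaces, together with its $d_\ee$-analogue) and (v) (that the two indices take only values of the form $\omega^\xi$, proved by a self-similarity argument showing $Sz(X)>\omega^\xi\Rightarrow Sz(X)\geqslant\omega^{\xi+1}$, and the same for $Dz$), and (vi) (see \cite{HajekLancienProchazka} for countable spaces), whose proof realises $d^\xi_\ee(B_{X^*})$ inside $s^\xi_{c\ee}(B_{(L_p(X))^*})=s^\xi_{c\ee}(B_{L_q(X^*)})$ via the constant-function embedding, exploiting that $w^*$-neighbourhoods in $L_q(X^*)$ control only conditional expectations while a non-$w^*$-denting point of $B_{X^*}$ can be replaced, over each conditioning atom, by a norm-spread profile with the same average.

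The remaining soft points: (ii) --- if $\dim X<\infty$ the $w^*$- and norm topologies on $X^*$ coincide and $B_{X^*}$ is norm-compact, so $s_\ee(B_{X^*})=\varnothing$ and $Sz(X)=1$; if $\dim X=\infty$, the familiar computation (for $\|x^*\|\leqslant t-\ee$ and a basic $w^*$-neighbourhood $V$ of $x^*$, choose a unit $w^*\in X^*$ annihilating the finitely many functionals defining $V$; then $x^*\pm\ee w^*$ lie in $V\cap tB_{X^*}$ and are $2\ee$ apart) gives $s_\ee(tB_{X^*})\supset(t-\ee)B_{X^*}$ for $t\geqslant\ee$, hence $s^n_\ee(B_{X^*})\supset(1-n\ee)_+B_{X^*}\neq\varnothing$ for $n<1/\ee$, so $Sz_\ee(X)$ is unbounded as $\ee\to0^+$ and $Sz(X)\geqslant\omega$. (iii) --- $X=\{0\}$ gives $d^1_\ee(B_{X^*})=\varnothing$, so $Dz(X)=1$; for $\dim X\geqslant2$ the slice analogue of the same computation gives $d_\ee(tB_{X^*})\supset(t-\ee)B_{X^*}$ for $t\geqslant\ee$, so $Dz(X)\geqslant\omega$, and $\dim X=1$ is settled by the explicit $d_\ee(B_{X^*})=[-(1-\ee),1-\ee]$ (real scalars; the complex case is similar). (vii) --- ``$\geqslant$'' is (i); ``$\leqslant$'' is trivial if $X\oplus Y$ is finite-dimensional, and otherwise one puts $\omega^\xi=\max\{Sz(X),Sz(Y)\}$ (by (v)), notes that $B_{(X\oplus Y)^*}$ lies, up to the renorming identification, inside $B_{X^*}\times B_{Y^*}$, and combines the standard product estimate $Sz_{2\ee}(K\times L)\leqslant Sz_\ee(K)+Sz_\ee(L)$ with the facts that $Sz_\ee(B_{X^*}),Sz_\ee(B_{Y^*})<\omega^\xi$ (each being a successor ordinal $\leqslant$ its index) and that $\omega^\xi$ is additively indecomposable. (viii) --- $\Phi^*$ is a $w^*$-to-$w^*$ continuous surjection $Y^*\twoheadrightarrow X^*$, so $\Phi^*B_{Y^*}$ is $w^*$-compact and equals the unit ball of the quotient norm $p(x^*)=\inf\{\|y^*\|:\Phi^*y^*=x^*\}$ on $X^*$, an equivalent $w^*$-lower semicontinuous norm and hence the dual norm of an equivalent norm on $X$; thus $Sz(\Phi)=Sz(\Phi^*B_{Y^*})=Sz(X)$ by renorming invariance.

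The substance is (ix), which I would prove by transfinite induction on $\xi$. For $\xi=0$ the claim is the hypothesis $K\subset L+\ee B_{X^*}$. For limit $\xi$, the inductive hypotheses give $s^\xi_{12\ee}(K)\subset\bigcap_{\zeta<\xi}\bigl(s^\zeta_{3\ee}(L)+\ee B_{X^*}\bigr)$, and this intersection equals $s^\xi_{3\ee}(L)+\ee B_{X^*}$, since for each $x^*$ in it the sets $(x^*+\ee B_{X^*})\cap s^\zeta_{3\ee}(L)$ are nonempty, $w^*$-compact and decreasing in $\zeta$, so their intersection $(x^*+\ee B_{X^*})\cap s^\xi_{3\ee}(L)$ is nonempty. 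The successor step reduces to a one-step lemma: \emph{if $A,B\subset X^*$ are $w^*$-compact and $A\subset B+\ee B_{X^*}$, then $s_{12\ee}(A)\subset s_{3\ee}(B)+\ee B_{X^*}$}; applying it with $A=s^\xi_{12\ee}(K)$ and $B=s^\xi_{3\ee}(L)$ --- whose hypothesis is exactly the inductive hypothesis at stage $\xi$ --- yields $s^{\xi+1}_{12\ee}(K)\subset s^{\xi+1}_{3\ee}(L)+\ee B_{X^*}$. Crucially the constants $12\ee$ and $3\ee$ are the same at every stage, so there is no accumulation of loss.

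To prove the one-step lemma, take $x^*\in s_{12\ee}(A)$ and suppose $x^*\notin s_{3\ee}(B)+\ee B_{X^*}$. Since $s_{3\ee}(B)$ is $w^*$-compact this forces $\text{dist}(x^*,s_{3\ee}(B))>\ee$, so we may fix $\rho\in(0,\ee/2)$ with $\text{dist}(x^*,s_{3\ee}(B))>\ee+2\rho$. Put $C:=\bigl(x^*+(\ee+2\rho)B_{X^*}\bigr)\cap B$; it is nonempty (as $x^*\in A\subset B+\ee B_{X^*}$), $w^*$-compact, and disjoint from $s_{3\ee}(B)$, so each $c^*\in C$ has a $w^*$-open neighbourhood $W_{c^*}$ with $\text{diam}(W_{c^*}\cap B)\leqslant3\ee$, and finitely many of them, with union $\Omega$, cover $C$. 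As each chosen $c^*$ lies in $W_{c^*}\cap B$, we get $\Omega\cap B\subset C+3\ee B_{X^*}$, whence $\text{diam}(\Omega\cap B)\leqslant\text{diam}(C)+6\ee\leqslant2(\ee+2\rho)+6\ee=8\ee+4\rho$. Now let $A_{\mathrm{bad}}:=\{a^*\in A:(a^*+\ee B_{X^*})\cap B\cap\Omega=\varnothing\}$; for $a^*\in A_{\mathrm{bad}}$ any $b^*\in B$ with $\|a^*-b^*\|\leqslant\ee$ must lie in $B\setminus\Omega$, so $A_{\mathrm{bad}}\subset(B\setminus\Omega)+\ee B_{X^*}$, a $w^*$-compact set not containing $x^*$ (a witness $b^*\in B\setminus\Omega$ with $\|x^*-b^*\|\leqslant\ee$ would lie in $(x^*+(\ee+2\rho)B_{X^*})\cap B=C\subset\Omega$). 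Hence $x^*$ has a $w^*$-neighbourhood $N$ disjoint from $A_{\mathrm{bad}}$, and every $a^*\in N\cap A$ admits $b^*(a^*)\in B\cap\Omega$ with $\|a^*-b^*(a^*)\|\leqslant\ee$; therefore $N\cap A\subset(\Omega\cap B)+\ee B_{X^*}$ and $\text{diam}(N\cap A)\leqslant\text{diam}(\Omega\cap B)+2\ee\leqslant10\ee+4\rho<12\ee$, contradicting $x^*\in s_{12\ee}(A)$. I expect this lemma to be the main obstacle: because $w^*$-neighbourhoods are never norm-small one cannot transport the small-diameter pieces covering $C$ back to $A$ directly, and the way around this is to observe that the set of points of $A$ lacking a nearby $B$-point inside $\Omega$ has $w^*$-closure avoiding $x^*$; the generous constants $12$ and $3$ are precisely what leaves enough slack for the final diameter estimate $10\ee+4\rho<12\ee$.
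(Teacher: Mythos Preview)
Your proposal is correct.  The paper does not prove items (i)--(viii) at all: it simply cites \cite{LancienSurvey}, \cite{HajekLancien} and \cite{BrookerAsplund} for them, so your sketches there are extra content rather than something to compare.  The only item the paper argues in full is (ix).

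For (ix), your base case and limit case agree with the paper's; the paper phrases the limit step with a convergent subnet rather than the finite-intersection property of the $w^*$-compact sets $(x^*+\ee B_{X^*})\cap s^\zeta_{3\ee}(L)$, but these are the same argument.  The successor step is where you diverge.  The paper gives a three-line direct net computation: from $x^*\in s_{12\ee}^{\xi+1}(K)$ choose a net $(x^*_\lambda)\subset s_{12\ee}^\xi(K)$ with $x^*_\lambda\overset{w^*}{\to}x^*$ and $\|x^*_\lambda-x^*\|>6\ee$; use the inductive hypothesis to write $x^*_\lambda=y^*_\lambda+z^*_\lambda$ with $y^*_\lambda\in s_{3\ee}^\xi(L)$ and $\|z^*_\lambda\|\leqslant\ee$; pass to a subnet so that $y^*_\lambda\to y^*\in s_{3\ee}^\xi(L)$ and $z^*_\lambda\to x^*-y^*\in\ee B_{X^*}$; then $\|y^*_\lambda-y^*\|\geqslant\|x^*_\lambda-x^*\|-\|z^*_\lambda\|-\|x^*-y^*\|>6\ee-2\ee>3\ee$, whence $y^*\in s_{3\ee}^{\xi+1}(L)$ and $x^*=y^*+(x^*-y^*)\in s_{3\ee}^{\xi+1}(L)+\ee B_{X^*}$.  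Your covering argument by contradiction is also correct --- the step identifying that $A_{\mathrm{bad}}$ is trapped in the $w^*$-compact set $(B\setminus\Omega)+\ee B_{X^*}$, which misses $x^*$, is the crux and is handled cleanly --- but it is substantially longer and works the constants harder than necessary.  What your approach buys is that it avoids nets entirely and makes explicit the role of compactness of $s_{3\ee}(B)$; what the paper's approach buys is brevity and a transparent accounting of where the factor between $12\ee$ and $3\ee$ is spent.
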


Items $(i)$-$(vi)$ can be found in the survey paper \cite{LancienSurvey}. Item $(vii)$ was stated in \cite{HajekLancien} in the case that $Y=X$, but the proof yields the version here.     Item $(viii)$ is due to Brooker \cite{BrookerAsplund}. 

The idea for  $(ix)$ is in \cite[Lemma $6.2$]{Lancien}, alhough the statement was slightly different.  We give the proof of item $(ix)$ as it is stated here.

\begin{proof}[Proof of $(ix)$] By induction.  The $\xi=0$ case is clear.  Assume $\xi$ is a limit ordinal and the result holds for every $\zeta<\xi$, and fix $x^*\in s^\xi_{12\ee}(K)$.  For every $\zeta<\xi$, since $x^*\in s_{12\ee}^\zeta(K)\subset s_{3\ee}^\zeta(L)+\ee B_{X^*}$, we may fix $y^*_\zeta\in s_{3\ee}^\zeta(L)$ and $z^*_\zeta\in \ee B_{X^*}$ such that $x^*=y^*_\zeta+ z^*_\zeta$.  We pass to a subnet $(y^*_\lambda)_{\lambda\in D}$ of $(y^*_\zeta)_{\zeta<\xi}$ with $w^*$-limit $y^*\in s^\xi_{3\ee}(L)$ and note that over the same subnet, $z^*_\lambda \underset{\lambda\in D, w^*}{\to}x^*-y^*\in \ee B_{X^*}$.  Therefore $x^*=y^*+z^*\in s_{3\ee}^\xi(L)+\ee B_{X^*}$.  Last, assume the result holds for some ordinal $\xi$ and suppose $x^*\in s^{\xi+1}_{12\ee}(K)$.   Fix a net $(x^*_\lambda)\subset s^\xi_{12\ee}(K)$ converging $w^*$ to $x^*$ and such that for every $\lambda$, $\|x^*_\lambda- x^*\|>6\ee$.   For every $\lambda$, fix $y^*_\lambda\in s_{3\ee}^\xi(L)$ and $z^*_\lambda\in \ee B_{X^*}$ such that $x^*_\lambda=y^*_\lambda+z^*_\lambda$.  By passing to a subnet, we may assume $y^*_\lambda\underset{w^*}{\to} y^*\in s_{3\ee}^\xi(L)$ and note that over the same subnet, $z^*_\lambda\underset{w^*}{\to} x^*-y^*\in \ee B_{X^*}$.   For every $\lambda$, $$\|y^*_\lambda - y^*\| = \|x^*_\lambda-z^*_\lambda- x^* +x^*-y^*\| \geqslant \|x^*_\lambda - x^*\|-\|z^*_\lambda\|-\|x^*-y^*\| >6\ee-2\ee>3\ee.$$   From this it follows that $y^*\in s_{3\ee}^{\xi+1}(L)$.

\end{proof}

Rudin \cite{Rudin} showed that if $K$ is compact, Hausdorff, scattered, $C(K)^*=\ell_1(K)$, where the canonical $\ell_1(K)$ basis is the set of Dirac functionals $\{\delta_x: x\in K\}$.  By a result of Namioka and Phelps \cite{NamiokaPhelps}, if $K$ is scattered, $C(K)$ is Asplund.  We note that a Banach space $X$ is an Asplund space if and only if every separable subspace of $X$ has separable dual.  This was shown in \cite{DevilleGodefroyZizler} in the real case, and it is explained in \cite{BrookerAsplund} how to deduce the complex case from the real case.  Stegall \cite{Stegall} showed that if every separable subspace of $X$ has separable dual, then $X^*$ has the Radon-Nikodym property.   It follows from \cite[Page 98]{DiestelUhl} that if $X^*$ has the Radon-Nikodym property, for any $1<p<\infty$, $L_p(X)^*=L_q(X^*)$ via the canonical embedding of $L_q(X^*)$ into $L_p(X)^*$. Here, $1/p+1/q=1$.  Therefore if $K$ is scattered and $1<p<\infty$, $L_p(C(K))^*=L_q(\ell_1(K))$.

Given a closed subset $F$ of $K$, we let $C_F(K)$ denote those $f\in C(K)$ such that $f|_F\equiv 0$. If $F=\varnothing$, we let $C_F(K)=C(K)$. If $K$ is scattered, we let $K_\infty=K^{CB(K)-1}$. This is  well-defined, since $CB(K)$ is a successor ordinal. We let $C_0(K)=C_{K_\infty}(K)$.    Note that $K_\infty$ is finite and non-empty, so $0<\dim C(K)/C_0(K)<\infty$.   From this it follows that $L_p(C(K))$ is isomorphic to $L_p(C_0(K))\oplus L_p$. Then if $K$ is infinite,  \begin{align*} Sz(L_p(C(K))) & =\max\{Sz(L_p(C_0(K))), Sz(L_p)\}=\max\{Sz(L_p(C_0(K))), \omega\} \\ & =Sz(L_p(C_0(K))).\end{align*} If $K$ is finite, $K_\infty=K$ and $C_0(K)=\{0\}$, so that $$Sz(L_p(C(K)))=Sz(L_p)=\omega,$$ since $L_p$ is asymptotically uniformly smooth.   This shows that in the non-trivial case that $K$ is infinite, to compute the Szlenk index of $L_p(C(K))$, it is sufficient to compute the Szlenk index of $L_p(C_0(K))$.

With $F\subset K$ still closed, we let $\approx_F $ denote the equivalence relation on $K$ given by $s\approx_F t$ if $s=t$ or if $s,t\in F$.   We let $K/F$ denote the space of equivalence classes $[s]$ of members $s$ of $K$ and endow $K/F$ with the quotient topology coming from the function $\chi_F$ given by $\chi_F(s)\mapsto [s]$. Of course, the equivalence classes in $K/F$ are $F$ and $\{s\}$, $s\in K\setminus F$.   Note that $C_F(K)$ is canonically isometrically isomorphic to $C_{\{F\}}(K/F)$ via the operator $T:C_{\{F\}}(K/F)\to C_F(K)$ given by $Tf(t)=f(\chi_F(t))$.   If $F=K^\gamma$ for some $\gamma<CB(K)$, it is straightforward to check that for $0\leqslant \zeta\leqslant \gamma$, then $(K/K^\gamma)^\zeta=q_F(K^\zeta)$.  In particular, $(K/K^\gamma)^\gamma=\{K^\gamma\}$, and $CB(K/K^\gamma)=\gamma+1$.   This means that $(K/K^\gamma)_\infty$, the last non-empty Cantor-Bendixson derivative of $K/K^\gamma$, is the space consisting of the single equivalence class $K^\gamma$.   From this and our previous remarks, it follows that $C_{K^\gamma}(K)$ is isometrically isomorphic to $C_0(K/K^\gamma)$.

Note that the restriction map $\rho_F:C(K)\to C(F)$ given by $f\mapsto f|_F$ is a quotient map by the Tietze extension theorem. The adjoint $\rho_F^*:\ell_1(F)\to \ell_1(K)$ is the inclusion, and $\rho^*_F B_{\ell_1(F)} =\{\mu\in B_{\ell_1(K)}: |\mu|(K\setminus F)=0\}$, which we identify with $B_{\ell_1(F)}$ throughout.  Note that this identification is a linear, $w^*$-$w^*$-continuous isometry, so that for any ordinal $\xi$ and any $\ee>0$, $s^\xi_\ee(B_{\ell_1(F)})= s^\xi_\ee(\rho^*_F B_{\ell_1(F)})$ and $d^\xi_\ee(B_{\ell_1(F)})= d^\xi_\ee(\rho^*_F B_{\ell_1(F)})$.   We will use this fact throughout.  Moreover, if $R_F:L_p(C(K))\to L_p(C(F))$ is the restriction given by $R_F f(t)= f(t)|_F$, $R_F^*:L_q(\ell_1(F))\to L_q(\ell_1(K))$ is the inclusion, and we may identify $B_{L_q(\ell_1(F))}$ with its image under $R^*_F$ when computing the Szlenk and $w^*$-derivations.    

 Let $\varphi_F:C_F(K)\to C(K)$, $\Phi_F:L_p(C_F(K))\to L_p(C(K))$ denote the inclusions.   Moreover, with this identification, Note that $L_q(\ell_1(F))$ is canonically included in $L_q(\ell_1(K))=L_p(C(K))^*$, and $$L_p(C_F(K))^\perp = L_q(C_F(K)^\perp)=L_q(\ell_1(F)).$$ From this it follows that $L_q(\ell_1(F))$ is $w^*$-closed in $L_q(\ell_1(K))$ and $$L_p(C_F(K))^*= L_q(\ell_1(K))/L_q(\ell_1(F)).$$   Note that any operator $p:\ell_1(K)\to \ell_1(K)$ extends to a function $P:L_q(\ell_1(K))\to L_q(\ell_1(K))$ given by $(Pf)(t)=p(f(t))$, and $\|P\|=\|p\|$.   Let $p_F:\ell_1(K)\to \ell_1(F)\subset \ell_1(K)$ be the canonical projection, and let $q_F$ denote the complementary projection $I_{\ell_1(K)}-p_F$. Note that $\|q_F\|, \|p_F\|\leqslant 1$. Given an ordinal $\gamma<CB(K)$, we let $p_\gamma$, $q_\gamma$ denote the projections $p_{K^\gamma}$, $q_{K^\gamma}$.  Let $P_F, Q_F:L_q(\ell_1(K))\to L_q(\ell_1(F))$ be the maps induced by $p_F$, $q_F$, respectively.  Let $P_\gamma$, $Q_\gamma$ be the maps induced by $p_\gamma$, $q_\gamma$.  Note that the quotient map $\varphi_F^*:\ell_1(K)\to \ell_1(K)/\ell_1(F)$ is given by $$\varphi^*_F(\mu)=q_F(\mu)+\ell_1(F),$$ and $\|\phi^*_F (\mu)\|=\|q_F (\mu)\|$.   The quotient map $\Phi_F^*:L_q(\ell_1(K))\to L_q(\ell_1(K))/L_q(\ell_1(F))$ is given by $$\Phi^*_F(f)= Q_F(f)+L_q(\ell_1(F)),$$   and $\|\Phi^*_F(f)\|=\|Q_F(f)\|$.

Note also that for any $f\in L_q(\ell_1(K))$, $$\|P_Ff\|^q+\|Q_F f\|^q\leqslant \|f\|^q.$$   Indeed, for any $t\in [0,1]$, $$\|p_F f(t)\|^q+\|q_F f(t)\|^q \leqslant (\|p_F f(t)\|+\|q_F f(t)\|)^q = \|f(t)\|^q.$$  Integrating over $t$ yields the inequality.  In particular, if $\|Q_F f\|^q=\|\Phi^*_F f\|^q \geqslant 1-\ee^q$ and $\|f\|\leqslant 1$, then $\|P_F f\|^q \leqslant 1-(1-\ee^q)=\ee^q$.   Thus if $f,g\in B_{L_q(\ell_1(K))}$, $\|f-g\|>3\ee$, and $\|\Phi^*_F f\|^q, \|\Phi^*_F g\|^q>1-\ee^q$, then $$\|\Phi^*_F f- \Phi^*_F g\| = \|Q_F f- Q_Fg\| \geqslant \|f-g\|-\|P_F f\|-\|P_F g\|>\ee.$$   We also note that if $\|\varphi^*_F \mu\|, \|\varphi^*_F \mu'\|>1-\ee$ and $\|\mu-\mu'\|>3\ee$, then $\|\varphi^*_F \mu-\varphi^*_F \mu'\|>\ee$.

We will use these fact to prove the following.  Item $(i)$ is based on \cite[Lemma $3.3$]{HajekLancien} and $(ii)$ is based on \cite[Lemma $6$]{HajekLancienProchazka}.  

\begin{lemma} Suppose $\ee>0$, $F$ is a closed subset of $K$, and $\beta$ is an ordinal. \begin{enumerate}[(i)]\item If $\mu\in s_{3\ee}^\beta(B_{\ell_1(K)})$ and $\|\varphi^*_F \mu\|>1-\ee$, $\varphi^*_F \mu\in s^\beta_\ee(\varphi^*_F B_{\ell_1(K)})$. \item If $f\in s_{3\ee}^\beta(B_{L_q(\ell_1(K))})$ and $\|\Phi^*_F f\|^q>1-\ee^q$, then $\Phi^*_F f\in s_\ee^\beta(\Phi^*_F B_{L_q(\ell_1(K))})$.  \end{enumerate}

\label{lemma1}
\end{lemma}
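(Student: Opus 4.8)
The plan is to prove (i) and (ii) together by transfinite induction on $\beta$, carrying out the argument for (i) in full since (ii) is completely parallel. The base case $\beta=0$ is immediate: $s^0_{3\ee}(B_{\ell_1(K)})=B_{\ell_1(K)}$ and $s^0_\ee(\varphi^*_F B_{\ell_1(K)})=\varphi^*_F B_{\ell_1(K)}$, so $\varphi^*_F\mu$ belongs to the latter with nothing to prove. The limit case is equally routine: if $\beta$ is a limit ordinal and $\mu\in s^\beta_{3\ee}(B_{\ell_1(K)})=\bigcap_{\zeta<\beta}s^\zeta_{3\ee}(B_{\ell_1(K)})$ satisfies $\|\varphi^*_F\mu\|>1-\ee$, then the inductive hypothesis applied for each $\zeta<\beta$ places $\varphi^*_F\mu$ in $s^\zeta_\ee(\varphi^*_F B_{\ell_1(K)})$, hence in the intersection $s^\beta_\ee(\varphi^*_F B_{\ell_1(K)})$.

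For the successor step I would first record two ingredients. The map $\varphi^*_F$ is $w^*$-$w^*$-continuous, being the adjoint of the inclusion $\varphi_F:C_F(K)\to C(K)$; and the set $G:=\{\nu\in\ell_1(K):\|\varphi^*_F\nu\|>1-\ee\}$ is $w^*$-open, since the dual norm on $\ell_1(K)/\ell_1(F)$ is $w^*$-lower semicontinuous (equivalently, $G$ is the $\varphi^*_F$-preimage of the complement of the $w^*$-compact set $(1-\ee)B_{\ell_1(K)/\ell_1(F)}$). Now assume the statement holds for $\beta$ and take $\mu\in s^{\beta+1}_{3\ee}(B_{\ell_1(K)})$ with $\|\varphi^*_F\mu\|>1-\ee$, i.e. $\mu\in G$. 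Since $\mu\in s^\beta_{3\ee}(B_{\ell_1(K)})$, the inductive hypothesis already yields $\varphi^*_F\mu\in s^\beta_\ee(\varphi^*_F B_{\ell_1(K)})$, so it suffices to show $\varphi^*_F\mu$ survives one further $\ee$-Szlenk derivative. Let $W$ be an arbitrary $w^*$-neighborhood of $\varphi^*_F\mu$ in $\ell_1(K)/\ell_1(F)$; then $V:=(\varphi^*_F)^{-1}(W)\cap G$ is a $w^*$-neighborhood of $\mu$. As $\mu\in s_{3\ee}(s^\beta_{3\ee}(B_{\ell_1(K)}))$, we have $\text{diam}(V\cap s^\beta_{3\ee}(B_{\ell_1(K)}))>3\ee$, so pick $\mu',\nu'\in V\cap s^\beta_{3\ee}(B_{\ell_1(K)})$ with $\|\mu'-\nu'\|>3\ee$. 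Since $\mu',\nu'\in G$, the inductive hypothesis gives $\varphi^*_F\mu',\varphi^*_F\nu'\in s^\beta_\ee(\varphi^*_F B_{\ell_1(K)})$, and these both lie in $W$; since moreover $\|\varphi^*_F\mu'\|,\|\varphi^*_F\nu'\|>1-\ee$ and $\|\mu'-\nu'\|>3\ee$, the norm estimate recorded just before the lemma gives $\|\varphi^*_F\mu'-\varphi^*_F\nu'\|>\ee$. Hence $\text{diam}(W\cap s^\beta_\ee(\varphi^*_F B_{\ell_1(K)}))>\ee$, and since $W$ was arbitrary, $\varphi^*_F\mu\in s_\ee(s^\beta_\ee(\varphi^*_F B_{\ell_1(K)}))=s^{\beta+1}_\ee(\varphi^*_F B_{\ell_1(K)})$.

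Part (ii) follows by the same induction, reading $\Phi^*_F$ for $\varphi^*_F$, $L_q(\ell_1(K))$ and $L_q(\ell_1(F))$ for $\ell_1(K)$ and $\ell_1(F)$, the open set $\{f\in L_q(\ell_1(K)):\|\Phi^*_F f\|^q>1-\ee^q\}$ for $G$ (it is $w^*$-open for the same reason, $\Phi^*_F$ being the $w^*$-$w^*$-continuous adjoint of the inclusion $\Phi_F:L_p(C_F(K))\to L_p(C(K))$), and the $L_q$-norm estimate recorded before the lemma — that $f,g\in B_{L_q(\ell_1(K))}$ with $\|f-g\|>3\ee$ and $\|\Phi^*_F f\|^q,\|\Phi^*_F g\|^q>1-\ee^q$ satisfy $\|\Phi^*_F f-\Phi^*_F g\|>\ee$ — for the scalar one.

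The point that requires care, and also the reason the constants pass cleanly from $3\ee$ on the domain side to $\ee$ on the quotient side, is that one must not collapse the diameter condition to a single approximating net. A set of diameter exceeding $3\ee$ produces a \emph{pair} $\mu',\nu'$ that is $3\ee$-separated, not a single point $3\ee$-far from $\mu$, so passing to one net would sacrifice a factor of $2$ and spoil the estimate. Keeping the pair, the essential move is to first intersect the neighborhood with the $w^*$-open set $G$ where the norm hypothesis $\|\varphi^*_F\cdot\|>1-\ee$ (resp. $\|\Phi^*_F\cdot\|^q>1-\ee^q$) persists, which is precisely what makes both the inductive hypothesis and the norm estimate applicable to $\mu'$ and $\nu'$. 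Checking that $G$ and its analogue are genuinely $w^*$-open — via $w^*$-lower semicontinuity of dual norms composed with the $w^*$-continuous maps $\varphi^*_F$ and $\Phi^*_F$ — is the only slightly delicate verification.
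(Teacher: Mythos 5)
Your proof is correct and follows essentially the same route as the paper's: induction on $\beta$, with the successor step handled by pushing a $3\ee$-separated pair from $s^\beta_{3\ee}(B_{\ell_1(K)})$ through the $w^*$-continuous map $\varphi^*_F$ and invoking the quotient-norm separation estimate recorded before the lemma. The only cosmetic difference is that you phrase the derivation via neighborhoods and the $w^*$-open set $G$ where the norm condition persists, whereas the paper uses a pair of nets and passes to a subnet (implicitly using the same $w^*$-lower semicontinuity of the quotient norm).
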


\begin{proof}$(i)$ We work by induction on $\beta$, with the base and limit ordinal cases clear.  Assume $\mu\in s^{\beta+1}_{3\ee}(B_{\ell_1(K)})$ and $\|\varphi^*_F \mu\|>1-\ee$.   Then there exist two nets $(\mu_\lambda)$, $(\eta_\lambda)$ contained in $s_{3\ee}^\beta(B_{\ell_1(K)})$ indexed by the same directed set, converging $w^*$ to $\mu$, and such that $\|\mu_\lambda-\eta_\lambda\|>3\ee$ for all $\lambda$.  By passing to a subnet and using $w^*$-$w^*$ continuity, we may assume that $\|\varphi^*_F \mu_\lambda\|$, $\|\varphi^*_F \eta_\lambda\|>1-\ee$.  From this it follows that $\varphi^*_F \mu_\lambda, \varphi^*_F \eta_\lambda\in s_\ee^\beta(\varphi^*_F B_{\ell_1(K)})$.  By our previous remarks, $\|\varphi^*_F \mu_\lambda -\varphi^*_F \eta_\lambda\|>\ee$, and since $\varphi^*_F \mu_\lambda$, $\varphi^*_F \eta_\lambda\underset{w^*}{\to} \varphi^*_F \mu$, we deduce that $\varphi^*_F \mu\in s^{\beta+1}_\ee(\varphi^*_F B_{\ell_1(K)})$.

$(ii)$ This follows from an inessential modification of $(i)$.

\end{proof}

\begin{proposition} Let $F\subset K$ be a closed subset of the compact, Hausdorff space $K$.  \begin{enumerate}[(i)]\item If $F$ is a finite set of isolated points, $Sz(\varphi_{K\setminus F})=1$. \item If $F$ is a finite, non-empty set of isolated points, $Sz(\Phi_{K\setminus F})=\omega$.  \item For any $\mu\in \ell_1(K)$ and any real number $a>0$, if $\|\varphi^*_1 \mu\|>a$, then there exists a finite set $F$ of isolated points such that $\|\varphi^*_{K\setminus F} \mu\|>a$.  \item For any $f\in L_q(\ell_1(K))$ and any real number $a>0$, if $\|\Phi^*_1 f\|>a$, then there exists a finite set $F$ of isolated points such that $\|\Phi^*_{K\setminus F} f\|>a$. \item If $\xi<CB(K)$ is a limit ordinal, $\mu\in \ell_1(K)$, and $\|\varphi^*_\xi \mu\|>a$, then there exists an ordinal $\gamma<\xi$ such that $\|\varphi^*_\gamma \mu\|>a$.  \item If $\xi<CB(K)$ is a limit ordinal, $f\in L_q(\ell_1(K))$, and $\|\Phi^*_\xi f\|>a$, then there exists an ordinal $\gamma<\xi$ such that $\|\Phi^*_\gamma f\|>a$.   \end{enumerate}

\label{barry}

\end{proposition}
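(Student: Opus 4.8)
The plan is to handle the six items in three pairs, each resting on one elementary fact. Items (i) and (ii) are the ``base'' cases: since $F$ is a finite set of isolated points, every scalar function on $F$ extends to a continuous function on $K$ vanishing off $F$, so $C_{K\setminus F}(K)$ is finite-dimensional, and when $F\neq\varnothing$ the space $L_p(C_{K\setminus F}(K))$ is isomorphic to a finite direct sum of copies of $L_p$. Because $\varphi_{K\setminus F}$ and $\Phi_{K\setminus F}$ are isometric embeddings, Proposition \ref{szlenk facts}(viii) identifies $Sz(\varphi_{K\setminus F})$ with $Sz(C_{K\setminus F}(K))=1$ (Proposition \ref{szlenk facts}(ii)), and identifies $Sz(\Phi_{K\setminus F})$ with $Sz(L_p(C_{K\setminus F}(K)))$, which equals $Sz(L_p)=\omega$ by Proposition \ref{szlenk facts}(vii) together with the asymptotic uniform smoothness of $L_p$.

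For items (iii) and (v) I would use the recorded identity $\|\varphi_F^*\mu\|=\|q_F\mu\|=\sum_{x\in K\setminus F}|\mu(x)|$. In (iii), $K\setminus K^1$ is exactly the set of isolated points of $K$, so the hypothesis says $\sum_{x\in K\setminus K^1}|\mu(x)|>a$; since this is a convergent series of nonnegative terms, some finite set $F$ of isolated points already satisfies $\sum_{x\in F}|\mu(x)|>a$, and then $\|\varphi_{K\setminus F}^*\mu\|=\sum_{x\in F}|\mu(x)|>a$. In (v) the same argument produces a finite set $E\subseteq K\setminus K^\xi$ with $\sum_{x\in E}|\mu(x)|>a$; since $\xi$ is a limit ordinal, $K^\xi=\bigcap_{\gamma<\xi}K^\gamma$, so each $x\in E$ lies outside $K^{\gamma_x}$ for some $\gamma_x<\xi$, and $\gamma=\max_{x\in E}\gamma_x<\xi$ works, since then $E\subseteq K\setminus K^\gamma$ and $\|\varphi_\gamma^*\mu\|=\sum_{x\in K\setminus K^\gamma}|\mu(x)|\geqslant\sum_{x\in E}|\mu(x)|>a$.

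Items (iv) and (vi) are the analogous $L_q$-statements, where $\|\Phi_F^*f\|=\|Q_Ff\|$ and $\|Q_Ff\|^q=\int_0^1\big(\sum_{x\in K\setminus F}|f(t)(x)|\big)^q\,dt$. The one real point is the interchange of a supremum with this integral, and the device is that $f$, being Bochner integrable, is essentially separably valued, so there is a countable set $S\subseteq K$ with $f(t)\in\ell_1(S)$ for a.e.\ $t$. Enumerating $(K\setminus K^1)\cap S=\{y_1,y_2,\dots\}$ for (iv), resp.\ $(K\setminus K^\xi)\cap S=\{y_1,y_2,\dots\}$ for (vi), and putting $E_n=\{y_1,\dots,y_n\}$, for a.e.\ $t$ the partial sums $\sum_{i\leqslant n}|f(t)(y_i)|$ increase to $\|q_1 f(t)\|$, resp.\ $\|q_\xi f(t)\|$; the monotone convergence theorem then gives $\int_0^1\big(\sum_{i\leqslant n}|f(t)(y_i)|\big)^q\,dt\uparrow\|\Phi_1^*f\|^q>a^q$, resp.\ $\uparrow\|\Phi_\xi^*f\|^q>a^q$, so some $n$ suffices. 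For (iv) take $F=E_n$, a finite set of isolated points, and note $\|\Phi_{K\setminus F}^*f\|^q=\int_0^1\big(\sum_{i\leqslant n}|f(t)(y_i)|\big)^q\,dt>a^q$. For (vi) pick $\gamma_i<\xi$ with $y_i\notin K^{\gamma_i}$, set $\gamma=\max_{i\leqslant n}\gamma_i<\xi$, so that $E_n\subseteq K\setminus K^\gamma$ and hence $\|\Phi_\gamma^*f\|^q=\int_0^1\|q_\gamma f(t)\|^q\,dt\geqslant\int_0^1\big(\sum_{i\leqslant n}|f(t)(y_i)|\big)^q\,dt>a^q$.

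I expect the only genuine obstacle to be precisely this supremum/integral exchange in (iv) and (vi): a priori one faces a monotone limit over the uncountably directed family of finite subsets of $K\setminus K^\xi$ (or $K\setminus K^1$), for which integration need not commute with the supremum, and it is exactly the reduction to a countable cofinal subfamily, via the ``support set'' $S$ of the Bochner integrable function $f$, that makes the monotone convergence theorem available. Everything else is routine bookkeeping with convergent $\ell_1$-series and with the facts about $C(K)$ recorded before the statement.
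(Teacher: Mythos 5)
Your proposal is correct, and for items (i) and (ii) it is exactly the paper's argument: the inclusions are isometric embeddings, so Proposition \ref{szlenk facts}(viii) reduces the computation to $Sz(C_{K\setminus F}(K))=1$ (finite dimension) and $Sz(L_p(C_{K\setminus F}(K)))=Sz(L_p)=\omega$. Where you diverge is in (iii)--(vi). For (iii) and (v) the paper argues via inner regularity of $|\mu|$ plus a compactness lemma (every compact subset of $K\setminus K^\xi$ is contained in $K\setminus K^\gamma$ for some $\gamma<\xi$, proved with a net argument); you instead use that an element of $\ell_1(K)$ is purely atomic with summable mass, so a finite set of atoms already carries more than $a$ of $\|q_1\mu\|$, resp. $\|q_\xi\mu\|$, and in (v) a maximum of finitely many ordinals $\gamma_x<\xi$ does the job --- a slightly more elementary route that bypasses the subnet argument, and is legitimate precisely because the statement is about $\ell_1(K)$ rather than general regular measures. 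For (iv) and (vi) the paper's one-line reduction is ``simple functions plus density,'' making these formal consequences of (iii) and (v) after a small quantitative approximation; you instead invoke essential separability of the strongly measurable $f$ to find a countable set $S$ supporting $f(t)$ a.e., and then apply monotone convergence along the increasing finite subsets of $(K\setminus K^1)\cap S$, resp. $(K\setminus K^\xi)\cap S$. That correctly isolates and resolves the only real issue (exchanging the integral with a supremum over an uncountable directed family of finite sets), at the cost of citing the Pettis-type essential separability of Bochner classes, whereas the paper's density argument avoids that but needs an $\ee$-bookkeeping step. Both routes are sound and yield the stated conclusions.
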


\begin{proof} We use Proposition \ref{szlenk facts} for $(i)$ and $(ii)$. 

$(i)$ This follows from the fact that the inclusion operator $\varphi_{K\setminus F}:C_{K\setminus F}(K)\to C(K)$ has the same Szlenk index as $C_{K\setminus F}(K)$, which is $1$, since $C_{K\setminus F}(K)$ has finite dimension.

$(ii)$ This follows from the fact that $\Phi_{K\setminus F}$ has the same Szlenk index as $L_p(C_{K\setminus F}(K))\approx L_p$.

$(iii)$ This follows from regularity and the fact that $K\setminus K'=K\setminus K^1$ is the set of isolated points in $K$.   If $\|q_1 \mu\|=|\mu|(K\setminus K')>a$, then there exists a compact, and therefore finite, subset $F$ of $K\setminus K'$ such that $\|q_{K\setminus F} \mu\|=|\mu|(F)>a$.

$(iv)$ One uses $(iii)$ to deduce the result first for simple functions and then extend by density.

$(v)$ This follows from regularity and the fact that any compact subset $F$ of $K\setminus K^\xi$ is contained in $K\setminus K^\gamma$ for some $\gamma<\xi$. Indeed, if such a gamma did not exist, we could choose for every $\gamma<\xi$ some $x_\gamma\in F\cap K^\gamma$.  Any convergent subnet of the net $(x_\gamma)_{\gamma<\xi}$ necessarily converges to a member of $K^\xi\subset K\setminus F$, contradicting the compactness of $F$.   If $\|q_\xi \mu\|=|\mu|(K\setminus K^\xi)>a$, there exists $\gamma<\xi$ and a compact subset $F$ of $K\setminus K^\gamma$ such that $|\mu|(F)>a$.   Then $\|q_\gamma \mu\|=|\mu|(K\setminus K^\gamma)\geqslant |\mu|(F)>a$.

$(vi)$ One uses $(v)$ to deduce the result first for simple functions and then extend by density.

\end{proof}

We conclude this section with a technical fact.  The general idea behind this fact is well-known.  We recall the Hessenberg sum of two ordinals, the details of which can be found in \cite{Monk}.  Given a non-zero ordinal $\xi$, we may write $\xi=\omega^{\gamma_1}n_1+\ldots + \omega^{\gamma_k}n_k$, where $\gamma_1>\ldots >\gamma_k$ and $k, n_i\in \nn$.  Given two ordinals $\xi, \zeta$, by representing them in Cantor normal and then including zero terms if necessary, we may assume there exist $k\in \nn$, $m_i, n_i\in \nn\cup\{0\}$, and $\gamma_1>\ldots >\gamma_k$ such that $\xi=\omega^{\gamma_1}m_1+\ldots + \omega^{\gamma_k}m_k$ and $\zeta=\omega^{\gamma_1}n_1+\ldots + \omega^{\gamma_k}n_k$.  Then the Hessenberg sum of $\xi, \zeta$ is $$\xi\oplus \zeta=\omega^{\gamma_1}(m_1+n_1)+\ldots + \omega^{\gamma_k}(m_k+n_k).$$  We remark that $(\xi\oplus \zeta)+1=(\xi+1)\oplus \zeta$.  We also note that for any ordinal $\xi$, the set $\{(\zeta_1, \zeta_2): \zeta_1\oplus \zeta_2=\xi\}$ is finite. Last, if $\xi$ is any ordinal and $r\in \nn$, $\{(\zeta_1, \zeta_2): \zeta_1\oplus \zeta_2=\omega^\xi r\}=\{(\omega^\xi k, \omega^\xi(r-k): 0\leqslant k\leqslant r\}$.

\begin{lemma} Suppose $B, L, A\subset X^*$ are $w^*$-compact and $B\subset L+A$. \begin{enumerate}[(i)]\item For any $\ee>0$, $s_{2\ee}(B)\subset (s_\ee(L)+A)\cup (L+s_\ee(A))$. \item   For any ordinal $\xi$ and any $\ee>0$,  $$s_{4\ee}^\xi(B)\subset \underset{\zeta\oplus\eta=\xi}{\bigcup} [s_\ee^\zeta(L)+ s_\ee^\eta(A)].$$ \item If for some $\ee>0$, some ordinal $\xi$, and some $r\in \nn$,  $Sz_{3\ee}(L)<\omega^\xi r$, then $$s_{12\ee}^{\omega^\xi r}(B)\subset L+s_{3\ee}^{\omega^\xi}(A).$$   \end{enumerate}

\label{smooth}
\end{lemma}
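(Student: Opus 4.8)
The plan is to prove (ii) by transfinite induction on $\xi$, to obtain (i) from the first step of that induction, and to deduce (iii) from (ii) using the stated arithmetic of the Hessenberg sum. Throughout write $M_{\zeta,\eta}=s_\ee^\zeta(L)+s_\ee^\eta(A)$, which is $w^*$-compact, and $N_\xi=\bigcup_{\zeta\oplus\eta=\xi}M_{\zeta,\eta}$; the latter is a \emph{finite} union of $w^*$-compact sets, since only finitely many pairs Hessenberg-sum to $\xi$. The goal of (ii) is $s_{4\ee}^\xi(B)\subseteq N_\xi$, and the case $\xi=0$ is just $B\subseteq L+A$ (the only pair is $(0,0)$).

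For the successor step, assume $s_{4\ee}^\xi(B)\subseteq N_\xi$ and fix $x^*\in s_{4\ee}^{\xi+1}(B)=s_{4\ee}\big(s_{4\ee}^\xi(B)\big)$. From the definition of $s_{4\ee}$ extract a net $(x^*_\lambda)$ in $s_{4\ee}^\xi(B)$ with $x^*_\lambda\to x^*$ in $w^*$ and $\|x^*_\lambda-x^*\|>2\ee$ for all $\lambda$ (in each $w^*$-neighbourhood of $x^*$ pick two points of $s_{4\ee}^\xi(B)$ more than $4\ee$ apart and retain the one farther from $x^*$). By the inductive hypothesis $x^*_\lambda\in N_\xi$, a finite union, so a cofinal subnet lies in a single $M_{\zeta_0,\eta_0}$ with $\zeta_0\oplus\eta_0=\xi$; write $x^*_\lambda=\ell_\lambda+a_\lambda$ with $\ell_\lambda\in s_\ee^{\zeta_0}(L)$, $a_\lambda\in s_\ee^{\eta_0}(A)$, and by $w^*$-compactness pass to a further subnet so that $\ell_\lambda\to\ell\in s_\ee^{\zeta_0}(L)$ and $a_\lambda\to a\in s_\ee^{\eta_0}(A)$ in $w^*$, whence $x^*=\ell+a$. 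Since $\|\ell_\lambda-\ell\|+\|a_\lambda-a\|\geqslant\|x^*_\lambda-x^*\|>2\ee$, on a further subnet either $\|\ell_\lambda-\ell\|>\ee$ for all $\lambda$ or $\|a_\lambda-a\|>\ee$ for all $\lambda$; in the first case $(\ell_\lambda)\subseteq s_\ee^{\zeta_0}(L)$ converges $w^*$ to $\ell\in s_\ee^{\zeta_0}(L)$ while staying $\ee$-separated from it, so $\ell\in s_\ee\big(s_\ee^{\zeta_0}(L)\big)=s_\ee^{\zeta_0+1}(L)$ and thus $x^*\in M_{\zeta_0+1,\eta_0}\subseteq N_{\xi+1}$ because $(\zeta_0+1)\oplus\eta_0=(\zeta_0\oplus\eta_0)+1=\xi+1$; the second case gives $x^*\in M_{\zeta_0,\eta_0+1}$, again with Hessenberg sum $\xi+1$. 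Statement (i) is obtained by the same decomposition-and-subnet argument applied directly to a net in $B$ converging to $x^*$ and separated from it.

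The delicate step is the limit stage. Let $\xi$ be a limit ordinal, assume (ii) for every $\xi'<\xi$, and fix $x^*\in s_{4\ee}^\xi(B)=\bigcap_{\xi'<\xi}s_{4\ee}^{\xi'}(B)$. For each $\xi'<\xi$ choose, using $x^*\in N_{\xi'}$, ordinals $\zeta_{\xi'}\oplus\eta_{\xi'}=\xi'$ and a decomposition $x^*=\ell_{\xi'}+a_{\xi'}$ with $\ell_{\xi'}\in s_\ee^{\zeta_{\xi'}}(L)$, $a_{\xi'}\in s_\ee^{\eta_{\xi'}}(A)$. Regard $\xi'\mapsto(\ell_{\xi'},a_{\xi'},\zeta_{\xi'},\eta_{\xi'})$ as a net and, using $w^*$-compactness of $L$ and $A$ together with compactness of the order space $[0,\xi]$, pass to a subnet (still cofinal in $\xi$) along which $\ell_{\xi'}\to\ell\in L$, $a_{\xi'}\to a\in A$ in $w^*$ and $\zeta_{\xi'}\to\zeta^*$, $\eta_{\xi'}\to\eta^*$ in $[0,\xi]$; note $x^*=\ell+a$. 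One checks $\ell\in s_\ee^{\zeta^*}(L)$: if $\zeta^*$ is $0$ or a successor it is $<\xi$ and isolated in $[0,\xi]$, so $\zeta_{\xi'}$ is eventually $\zeta^*$ and $s_\ee^{\zeta^*}(L)$ is $w^*$-closed; if $\zeta^*$ is a limit then for every $\beta<\zeta^*$ eventually $\zeta_{\xi'}\geqslant\beta$, so $\ell\in\bigcap_{\beta<\zeta^*}s_\ee^\beta(L)=s_\ee^{\zeta^*}(L)$. Likewise $a\in s_\ee^{\eta^*}(A)$, so $x^*\in M_{\zeta^*,\eta^*}$. Since $[0,\zeta^*]=[0,\zeta^*+1)$ and $[0,\eta^*]$ are open in $[0,\xi]$, eventually $\zeta_{\xi'}\leqslant\zeta^*$ and $\eta_{\xi'}\leqslant\eta^*$, hence $\xi'=\zeta_{\xi'}\oplus\eta_{\xi'}\leqslant\zeta^*\oplus\eta^*$ cofinally in $\xi$, so $\zeta^*\oplus\eta^*\geqslant\xi$. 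Now invoke the elementary fact that $\{\zeta'\oplus\eta':\zeta'\leqslant\zeta^*,\ \eta'\leqslant\eta^*\}$ is exactly the initial segment $[0,\zeta^*\oplus\eta^*]$: since $\xi$ lies in it there are $\zeta'\leqslant\zeta^*$, $\eta'\leqslant\eta^*$ with $\zeta'\oplus\eta'=\xi$, and then $x^*\in M_{\zeta^*,\eta^*}\subseteq M_{\zeta',\eta'}\subseteq N_\xi$. This completes the induction, and (ii) follows.

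Finally, for (iii) we may assume $L\neq\varnothing$, as otherwise $B\subseteq L+A=\varnothing$. Apply (ii) with $3\ee$ in place of $\ee$ and $\omega^\xi r$ in place of $\xi$, and use the stated identity $\{(\zeta,\eta):\zeta\oplus\eta=\omega^\xi r\}=\{(\omega^\xi k,\omega^\xi(r-k)):0\leqslant k\leqslant r\}$ to get $s_{12\ee}^{\omega^\xi r}(B)\subseteq\bigcup_{k=0}^{r}\big[s_{3\ee}^{\omega^\xi k}(L)+s_{3\ee}^{\omega^\xi(r-k)}(A)\big]$. A term is empty whenever $\omega^\xi k\geqslant Sz_{3\ee}(L)$, and since $Sz_{3\ee}(L)<\omega^\xi r$ this in particular kills the term $k=r$; for every surviving $k$ one has $\omega^\xi k<Sz_{3\ee}(L)<\omega^\xi r$, hence $k<r$, hence $r-k\geqslant1$, so $s_{3\ee}^{\omega^\xi k}(L)\subseteq L$ and $s_{3\ee}^{\omega^\xi(r-k)}(A)\subseteq s_{3\ee}^{\omega^\xi}(A)$, and the term lies in $L+s_{3\ee}^{\omega^\xi}(A)$; the whole union therefore lies in $L+s_{3\ee}^{\omega^\xi}(A)$, which is (iii). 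I expect the main obstacle to be the limit stage of (ii): the Hessenberg sum is not order-continuous, so the limiting exponents $\zeta^*,\eta^*$ can overshoot (their Hessenberg sum can strictly exceed $\xi$), and one must use the initial-segment property to descend back to a pair summing exactly to $\xi$ while preserving the inclusion $M_{\zeta^*,\eta^*}\subseteq M_{\zeta',\eta'}$.
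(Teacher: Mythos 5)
Your argument is correct. For (i), the successor step of (ii), and (iii) it is essentially the paper's own: decompose a net separated from $x^*$ along $L+A$, pass to subnets, and note that at least one of the two components must retain separation $>\ee$; in (iii), discard the empty term $k=r$ and absorb the remaining terms into $L+s^{\omega^\xi}_{3\ee}(A)$. (Minor bookkeeping: as in the paper's own proof, what your argument for (i) actually yields is $s_{4\ee}(B)\subset (s_\ee(L)+A)\cup(L+s_\ee(A))$, since producing a net separated from $x^*$ by $2\ee$ requires $x^*\in s_{4\ee}(B)$; this is all that is used later.) The genuine divergence is at the limit stage of (ii). The paper invokes \cite[Proposition $2.5$]{Causey} to extract, from the family $\alpha_\zeta\oplus\beta_\zeta=\zeta+1$, a cofinal set $S$ and a pair $\alpha\oplus\beta=\xi$ with one coordinate a limit approached from below along $S$ and the other bounded below, and then passes to a single $w^*$-convergent subnet in that coordinate. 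You instead compactify the exponents: run the quadruple $(\ell_{\xi'},a_{\xi'},\zeta_{\xi'},\eta_{\xi'})$ through the compact space $L\times A\times[0,\xi]\times[0,\xi]$, show $\ell\in s^{\zeta^*}_\ee(L)$ and $a\in s^{\eta^*}_\ee(A)$ by splitting on whether the limiting exponent is isolated or a limit in the order topology, and then repair the possible overshoot $\zeta^*\oplus\eta^*\geqslant\xi$ by descending to a pair summing exactly to $\xi$ via the identity $\{\zeta'\oplus\eta':\zeta'\leqslant\zeta^*,\ \eta'\leqslant\eta^*\}=[0,\zeta^*\oplus\eta^*]$ and the monotonicity of the derivations. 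This is a clean, self-contained alternative that trades the external combinatorial proposition for an arithmetic property of the natural sum; that property is true but is the one ingredient you assert without proof, and a referee would ask you to supply it (induction on $\alpha\oplus\beta$: if it is a successor, strip a successor coordinate using $(\alpha'+1)\oplus\beta=(\alpha'\oplus\beta)+1$; if it is a limit whose last Cantor term $\omega^{\gamma}$ is contributed by, say, $\alpha$, then $\sup_{\alpha'<\alpha}(\alpha'\oplus\beta)=\alpha\oplus\beta$, so any $\mu<\alpha\oplus\beta$ lies below some $\alpha'\oplus\beta$ and the inductive hypothesis applies). With that supplied, everything else in your write-up — the pigeonhole over the finite set of pairs summing to $\xi$, the cofinality argument giving $\zeta^*\oplus\eta^*\geqslant\xi$, and the reduction in (iii) — checks out.
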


\begin{proof}$(i)$ If $x^*\in s_{4\ee}(B)$, we may fix a net $(x^*_\lambda)\subset B$ converging $w^*$ to $x^*$ and such that $\|x^*_\lambda- x^*\|>2\ee$ for all $\lambda$. For each $\lambda$, write $x^*_\lambda= y^*_\lambda+z^*_\lambda\in L+A$.  We may pass to a subnet twice and assume $y^*_\lambda\underset{w^*}{\to} y^*\in L$, $z^*_\lambda\underset{w^*}{\to} z^*\in A$, and either $\|y^*_\lambda- y^*\|\geqslant \|z^*_\lambda - z^*\|$ for all $\lambda$ or $\|y^*_\lambda - y^*\| \leqslant \|z^*_\lambda-z^*\|$ for all $\lambda$.   In the first case, it follows that $y^*\in s_\ee(L)$, and in the second, $z^*\in s_\ee(A)$.  Since $y^*+z^*=x^*$, we deduce the first statement.

$(ii)$ We work by induction on $\xi$. The $\xi=0$ case is trivial.   Assume $$s^\xi_{4\ee}(B)\subset  \underset{\zeta\oplus \eta=\xi}{\bigcup} [s_\ee^\zeta(L)+ s_\ee^\eta(A)]$$ and $x^*\in s_\ee^{\xi+1}(B)$.     Fix a net $(x^*_\lambda)\subset s^\xi_{4\ee}(B)$ converging $w^*$ to $x^*$ such that $\|x^*_\lambda - x^*\|>2\ee$ for all $\lambda$.  Since $\{(\zeta, \eta): \zeta\oplus \eta=\xi\}$ is finite, we may pass to a subnet and assume that for some $\sigma, \tau$ with $\sigma\oplus \tau=\xi$, $x^*_\lambda\in s^\sigma_\ee(B)+s^\tau_\ee(L)$ for all $\lambda$.   Then $x^*\in s_{2\ee}(s^\sigma_\ee(L)+s^\tau_\ee(A))$, and we deduce by $(i)$ that $$x^*\in (s^{\sigma+1}_\ee(L)+s^\tau_\ee(A))\cup (s^\sigma_\ee(L)+s^{\tau+1}_\ee(A))\subset \underset{\zeta\oplus \eta=\xi+1}{\bigcup}[ s^\zeta_\ee(L)+ s^\eta_\ee(A)].$$  

Last, assume $\xi$ is a limit ordinal and the result holds for every $\zeta<\xi$.   For every $\zeta<\xi$, $x^*\in s^{\zeta+1}_{4\ee}(B)$, and there exist $\alpha_\zeta$, $\beta_\zeta$ such that $\alpha_\zeta\oplus \beta_\zeta=\zeta+1$ and $x^*\in s^{\alpha_\zeta}_\ee(L)+s^{\beta_\zeta}_\ee(A)$.  By \cite[Proposition $2.5$]{Causey}, there exist a subset $S$ of $[0, \xi)$ and ordinals $\alpha, \beta$ with $\alpha\oplus\beta=\xi$, and such that either $$\alpha\text{\ is a limit ordinal,}\hspace{5mm}\sup_{\zeta\in S}\alpha_\zeta=\alpha, \hspace{5mm} \min_{\zeta\in S}\beta_\zeta\geqslant \beta,$$ or $$\beta\text{\ is a limit ordinal,}\hspace{5mm} \sup_{\zeta\in S} \beta_\zeta=\beta, \hspace{5mm} \min_{\zeta\in S} \alpha_\zeta\geqslant \alpha.$$  In either case, $$x^*\in s^\alpha_\ee(L)+s^\beta_\ee(A)\subset \underset{\zeta\oplus \zeta=\xi}{\bigcup} [s^\zeta_\ee(L)+s^\eta_\ee(A)].$$ Indeed, in the first case, for every $\zeta\in S$, we may fix $y^*_\zeta\in s^{\alpha_\zeta}_\ee(L)$ and $z^*_\zeta\in S^{\beta_\zeta}_\ee(A)\subset s^\beta_\ee(A)$ such that $x^*=y^*_\zeta+z^*_\zeta$.     By passing to a $w^*$-converging subnet $(y^*_\zeta)_{\zeta\in D}$ of $(y^*_\zeta)_{\zeta\in S}$, we note that the $w^*$-limit must lie in $\cap_{\zeta\in S}s^\zeta_\ee(L)=s^\alpha_\ee(L)$, and over the same subnet, $z^*_\zeta\underset{w^*}{\to} x^*-y^*\in s^\beta_\ee(A)$. Thus $x^*=y^*+z^*\in s_\ee^\alpha(L)+s_\ee^\beta(A)$.  The other case is identical.

$(iii)$ For the final statement, note that since $s_{3\ee}^{\omega^\xi r}(L)=\varnothing$,  $$s^{\omega^\xi r}_{12\ee}(B)\subset \underset{k=0}{\overset{r}{\bigcup}}[s^{\omega^\xi k}_{3\ee}(L)+s^{\omega^\xi(r- k)}_{3\ee}(A)]= \underset{k=0}{\overset{r-1}{\bigcup}}[s^{\omega^\xi k}_{3\ee}(L)+s^{\omega^\xi (r-k)}_{3\ee}(A)] \subset L+s^{\omega^\xi}_{3\ee}(A).$$   

\end{proof}

\section{The upper estimates}

Recall that for $\gamma<CB(K)$, $\varphi_\gamma$ denotes the canonical inclusion of $C_{K^\gamma}(K)$ into $C(K)$, $C_{K^\gamma}(K)$ is isomorphic to $C_0(K/K^\gamma)$, and $CB(K/K^\gamma)=\gamma+1$.   Moreover, $\Phi_\gamma$ denotes the canonical inclusion of $L_p(C_{K^\gamma}(K))$ into $L_p(C(K))$, and $L_p(C_{K^\gamma}(K))$ is isomorphic to $L_p(C_0(K/K^\gamma))$.  

\begin{theorem} Given an ordinal $\xi$, consider the following statements: \begin{enumerate}[(i)]\item $A_\xi$: For any compact, Hausdorff space $K$ such that $CB(K)\leqslant \omega^\xi$, $Sz(C_0(K))\leqslant Sz(C(K))\leqslant \omega^\xi$.  \item $B_\xi$: If $\gamma<\omega^\xi$, then for any compact, Hausdorff space $K$ such that $\gamma<CB(K)$, if $\varphi_\gamma:C_{K^\gamma}(K)\to C(K)$ denotes the inclusion, $Sz(\varphi_\gamma)\leqslant \omega^\xi$. \item $C_\xi$: For any compact, Hausdorff $K$ such that $\omega^\xi<CB(K)$, and any $\ee\in (0,1)$, $$\varphi_{\omega^\xi}^*(s^{\omega^\xi}_{3\ee}(B_{\ell_1(K)}))\subset (1-\ee)\varphi^*_{\omega^\xi} B_{\ell_1(K)}.$$ \end{enumerate}

For every ordinal $\xi$, $A_\xi$, $B_\xi$, $C_\xi$ hold.  

\label{main}

\end{theorem}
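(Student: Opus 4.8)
The plan is to prove $A_\xi$, $B_\xi$, $C_\xi$ simultaneously by transfinite induction on $\xi$, assuming at stage $\xi$ that all three hold at every $\zeta<\xi$. The first point is that $A_\xi$ and $B_\xi$ are essentially interderivable. On one hand $C_{K^\gamma}(K)$ is isometric to $C_0(K/K^\gamma)$, a finite-codimensional subspace of $C(K/K^\gamma)$, and $CB(K/K^\gamma)=\gamma+1$; since $\varphi_\gamma$ is an isometric embedding, Proposition \ref{szlenk facts}$(i),(viii)$ give
$$Sz(\varphi_\gamma)=Sz\big(C_{K^\gamma}(K)\big)=Sz\big(C_0(K/K^\gamma)\big)\leqslant Sz\big(C(K/K^\gamma)\big),$$
and when $\gamma<\omega^\xi$ one has $\gamma+1\leqslant\omega^\xi$ for $\xi\geqslant 1$ (the case $\xi=0$ forces $\gamma=0$, $C_{K^0}(K)=\{0\}$), so $A_\xi$ applied to $K/K^\gamma$ yields $B_\xi$. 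On the other hand $Sz(C_0(K))\leqslant Sz(C(K))$ is Proposition \ref{szlenk facts}$(i)$, while $C(K)\cong C_0(K)\oplus C(K_\infty)$ with $C(K_\infty)$ finite-dimensional and $C_0(K)=C_{K^{CB(K)-1}}(K)$, so Proposition \ref{szlenk facts}$(vii)$ together with $B_\xi$ applied with $\gamma=CB(K)-1$ recovers $A_\xi$. Thus it suffices to prove $A_\xi$, then $B_\xi$, then $C_\xi$.

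Granting $B_\xi$, the deduction of $C_\xi$ is short. Fix $\ee\in(0,1)$ and $\mu\in s^{\omega^\xi}_{3\ee}(B_{\ell_1(K)})$, and suppose $\|\varphi^*_{\omega^\xi}\mu\|=|\mu|(K\setminus K^{\omega^\xi})>1-\ee$. For $\xi\geqslant 1$, $\omega^\xi$ is a limit ordinal, so Proposition \ref{barry}$(v)$ supplies $\gamma<\omega^\xi$ with $\|\varphi^*_\gamma\mu\|>1-\ee$, and then Lemma \ref{lemma1}$(i)$ places $\varphi^*_\gamma\mu$ in $s^{\omega^\xi}_\ee(\varphi^*_\gamma B_{\ell_1(K)})$, which is empty because $Sz(\varphi_\gamma)\leqslant\omega^\xi$ by $B_\xi$ — a contradiction. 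For $\xi=0$ the same scheme works with Proposition \ref{barry}$(iii)$ in place of $(v)$, producing a finite set $F$ of isolated points with $\|\varphi^*_{K\setminus F}\mu\|>1-\ee$, together with $s^1_\ee(\varphi^*_{K\setminus F}B_{\ell_1(K)})=\varnothing$ since $\varphi^*_{K\setminus F}B_{\ell_1(K)}$ is a finite-dimensional ball (Proposition \ref{barry}$(i)$). In either case we reach a contradiction, so $\|\varphi^*_{\omega^\xi}\mu\|\leqslant 1-\ee$, i.e. $\varphi^*_{\omega^\xi}\mu\in(1-\ee)\varphi^*_{\omega^\xi}B_{\ell_1(K)}$.

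It remains to prove $A_\xi$. For $\xi=0$ it is immediate ($CB(K)\leqslant 1$ forces $K$ finite), and for $\xi$ a limit ordinal, $CB(K)\leqslant\omega^\xi$ and the fact that $CB(K)$ is a successor force $CB(K)<\omega^\zeta$ for some $\zeta<\xi$, so $A_\zeta$ gives $Sz(C(K))\leqslant\omega^\zeta\leqslant\omega^\xi$. The substantial case is $\xi=\eta+1$, for which I would run a secondary induction on $n\in\nn$ on the statement: if $CB(K)\leqslant\omega^\eta n$ then for every $\delta\in(0,1)$ there is $m\in\nn$ with $s^{\omega^\eta m}_\delta(B_{\ell_1(K)})=\varnothing$ (equivalently $Sz(C(K))\leqslant\omega^{\eta+1}$). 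The case $n\leqslant 1$ is $A_\eta$. For the step from $n$ to $n+1$, put $F=K^{\omega^\eta}$, so $CB(F)\leqslant\omega^\eta n$ and the secondary hypothesis applies to $F$. If $CB(K)\leqslant\omega^\eta$ we finish by $A_\eta$; otherwise $\omega^\eta<CB(K)$ and $C_\eta$ yields, for every $\ee\in(0,1)$,
$$s^{\omega^\eta}_{3\ee}(B_{\ell_1(K)})\subset B_{\ell_1(F)}+(1-\ee)B_{\ell_1(K)},$$
a Minkowski sum of two $w^*$-compact sets (writing the second summand as $(1-\ee)B_{\ell_1(K)}$ rather than the non-$w^*$-compact ball of $\ell_1(K\setminus F)$ is exactly the point). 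Now Lemma \ref{smooth}$(iii)$, applied with its ``$\xi$'' taken to be $\eta$ and with $Sz_{3\ee}(B_{\ell_1(F)})<\omega^{\eta+1}$ (secondary hypothesis for $F$, so $<\omega^\eta r$ for some $r\in\nn$), absorbs the $B_{\ell_1(F)}$-summand at the cost of $\omega^\eta r$ extra derivations; feeding the result back through $C_\eta$ one obtains (roughly) that one round replaces the $(1-\ee)B_{\ell_1(K)}$-summand by a $(1-2\ee)B_{\ell_1(K)}$-summand, enlarges the $B_{\ell_1(F)}$-summand by a bounded multiple, and costs only finitely many $\omega^\eta$-blocks. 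After $O(1/\ee)$ such rounds the $B_{\ell_1(K)}$-summand has shrunk enough that its first Szlenk derivative is empty and it disappears, leaving a bounded multiple of $B_{\ell_1(F)}$, which is exhausted after fewer than $\omega^{\eta+1}$ further derivations by the secondary hypothesis for $F$. Thus $s^\beta_c(B_{\ell_1(K)})=\varnothing$ for some $\beta<\omega^{\eta+1}$ and some $c$ comparable to $\ee$; letting $\ee\to 0$ completes the secondary induction, hence $A_{\eta+1}$.

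The hard part will be the bookkeeping in that last iteration: one must check that across the $O(1/\ee)$ rounds the thresholds stay comparable to $\ee$, that each round — including the repeated sub-applications of $C_\eta$ and the rescalings of the balls — costs only finitely many $\omega^\eta$-blocks, and that the accumulated ordinal remains strictly below $\omega^{\eta+1}$. This is where the finiteness and explicit shape of $\{(\zeta_1,\zeta_2):\zeta_1\oplus\zeta_2=\omega^\eta r\}$ recorded before Lemma \ref{smooth}, and the multiplicative constants in Lemma \ref{smooth}, must be carefully balanced against each other. Everything else — the equivalence of $A_\xi$ and $B_\xi$, the derivation of $C_\xi$, and the base and limit cases of $A_\xi$ — is formal once this is in place.
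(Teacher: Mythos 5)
Your proposal follows essentially the same route as the paper: the implications $A_\xi\Rightarrow B_\xi\Rightarrow C_\xi$, the trivial base and limit cases of $A_\xi$, and for $\xi=\eta+1$ a secondary induction on the number of $\omega^\eta$-blocks in $CB(K)$, decomposing $s^{\omega^\eta}_{3\ee}(B_{\ell_1(K)})\subset B_{\ell_1(K^{\omega^\eta})}+(1-\ee)B_{\ell_1(K)}$ via $C_\eta$ and iterating with Lemma \ref{smooth}$(iii)$. The bookkeeping you flag as the hard part is exactly what the paper carries out (tracking $\|\varphi^*_F f\|\leqslant(1-\ee)^i$ after $\omega^\eta r i$ derivations, with the $B_{\ell_1(F)}$-summand in fact never enlarged since $\|p_F\|\leqslant 1$, and a final absorption once $(1-\ee)^j<\ee$ using Proposition \ref{szlenk facts}$(ix)$), so the plan is sound.
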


\begin{theorem}

 Given an ordinal $\xi$, consider the following statements: \begin{enumerate}[(i)]\item $A_\xi':$ For any compact, Hausdorff space $K$ such that $CB(K)\leqslant \omega^\xi$, $Sz(L_p(C_0(K)))\leqslant Sz(L_p(C(K)))\leqslant \omega^{1+\xi}$.  \item $B_\xi':$ If $\gamma<\omega^\xi$, then for any compact, Hausdorff space $K$ such that $\gamma<CB(K)$, if $\Phi_\gamma:L_p(C_{K^\gamma}(K))\to L_p(C(K))$ denotes the inclusion, $Sz(\Phi_\gamma)\leqslant \omega^{1+\xi}$.  \item $C_\xi':$ For any compact, Hausdorff $K$ such that $\omega^\xi<CB(K)$, and any $\ee\in (0,1)$, $$\Phi^*_{\omega^\xi}(s^{\omega^{1+\xi}}_{3\ee}(B_{L_q(\ell_1(K))}))\subset (1-\ee^q)^{1/q}\Phi^*_{\omega^\xi} B_{L_q(\ell_1(K))} .$$  \end{enumerate}

For every ordinal $\xi$, $A_\xi'$, $B_\xi'$, $C_\xi'$ hold.

\label{main thing}
\end{theorem}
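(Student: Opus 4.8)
The plan is to prove Theorem~\ref{main thing} by transfinite induction on $\xi$, establishing $A_\xi'$, $B_\xi'$, $C_\xi'$ simultaneously; the argument runs parallel to the proof of Theorem~\ref{main}, through the following dictionary of substitutions. Everywhere $\omega^\xi$ occurs it is replaced by $\omega^{1+\xi}$ --- this is forced since $Sz(L_p)=\omega$, so every nontrivial $L_p(X)$ has Szlenk index at least $\omega$; Lemma~\ref{lemma1}$(i)$ and Proposition~\ref{barry}$(i)$, $(iii)$, $(v)$ are replaced by the $L_p$-versions Lemma~\ref{lemma1}$(ii)$ and Proposition~\ref{barry}$(ii)$, $(iv)$, $(vi)$; the estimate $1-\ee$ is replaced by $(1-\ee^q)^{1/q}$, which is the quantitative content of $\|p_F\nu\|^q+\|q_F\nu\|^q\leqslant\|\nu\|^q$ integrated over $[0,1]$; and the splitting $C(K)=C_0(K)\oplus C(K_\infty)$ is replaced by $L_p(C(K))\cong L_p(C_0(K))\oplus L_p$. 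Since a quotient map carries the unit ball onto the unit ball, $\Phi^*_F B_{L_q(\ell_1(K))}$ is the unit ball of $L_p(C_F(K))^*$, so the conclusion of $C_\xi'$ is exactly the inequality $\|\Phi^*_{\omega^\xi} f\|\leqslant(1-\ee^q)^{1/q}$ for $f\in s_{3\ee}^{\omega^{1+\xi}}(B_{L_q(\ell_1(K))})$.

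For the base case $\xi=0$: $A_0'$ holds because $CB(K)\leqslant 1$ forces $K$ finite, so $L_p(C(K))$ is $\{0\}$ or a finite $\ell_p$-sum of copies of $L_p$, with Szlenk index at most $\omega=\omega^{1+0}$, while $L_p(C_0(K))$ embeds isometrically; $B_0'$ is vacuous since $\gamma<1$ forces $\gamma=0$ and $C_{K^0}(K)=\{0\}$; and $C_0'$ is proved directly: were $f\in s_{3\ee}^{\omega}(B_{L_q(\ell_1(K))})$ with $\|\Phi^*_1 f\|^q>1-\ee^q$, Proposition~\ref{barry}$(iv)$ would give a finite set $F$ of isolated points with $\|\Phi^*_{K\setminus F} f\|^q>1-\ee^q$, Lemma~\ref{lemma1}$(ii)$ would put $\Phi^*_{K\setminus F} f$ in $s_\ee^{\omega}(\Phi^*_{K\setminus F} B_{L_q(\ell_1(K))})$, and that set is empty because $Sz(\Phi_{K\setminus F})=\omega$ by Proposition~\ref{barry}$(ii)$. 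For the inductive step, fix $\xi$ and assume $A_\zeta'$, $B_\zeta'$, $C_\zeta'$ for all $\zeta<\xi$. First, $B_\xi'$ is just $A_\xi'$ transported through the isometry $C_{K^\gamma}(K)\cong C_0(K/K^\gamma)$ --- here $CB(K/K^\gamma)=\gamma+1\leqslant\omega^\xi$ --- together with $Sz(\Phi_\gamma)=Sz(L_p(C_0(K/K^\gamma)))$, valid by Proposition~\ref{szlenk facts}$(viii)$ since $\Phi_\gamma$ is an isometric embedding. Second, once $A_\xi'$ is known, $C_\xi'$ follows: if $f\in s_{3\ee}^{\omega^{1+\xi}}(B_{L_q(\ell_1(K))})$ had $\|\Phi^*_{\omega^\xi} f\|^q>1-\ee^q$, then Proposition~\ref{barry}$(vi)$, applied to the limit ordinal $\omega^\xi<CB(K)$, would give $\gamma<\omega^\xi$ with $\|\Phi^*_\gamma f\|^q>1-\ee^q$ and $CB(K/K^\gamma)=\gamma+1\leqslant\omega^\xi$; Lemma~\ref{lemma1}$(ii)$ would then put $\Phi^*_\gamma f$ in $s_\ee^{\omega^{1+\xi}}(\Phi^*_\gamma B_{L_q(\ell_1(K))})$, which is empty because $Sz(L_p(C_0(K/K^\gamma)))\leqslant\omega^{1+\xi}$ by $A_\xi'$ --- a contradiction.

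So everything reduces to proving $A_\xi'$. If $\xi$ is a limit ordinal, then $CB(K)$, being a successor ordinal, is $\leqslant\omega^\zeta$ for some $\zeta<\xi$, and $A_\zeta'$ finishes. So let $\xi=\eta+1$; if $CB(K)\leqslant\omega^\eta$ then $A_\eta'$ finishes, so assume $\omega^\eta<CB(K)\leqslant\omega^{\eta+1}$ and run a further induction on $CB(K)$. Applying $C_\eta'$ to $K$ --- legitimate since $\omega^\eta<CB(K)$ --- gives, for each $\delta\in(0,1)$,
\[s_{3\delta}^{\omega^{1+\eta}}(B_{L_q(\ell_1(K))})\subseteq B_{L_q(\ell_1(K^{\omega^\eta}))}+(1-\delta^q)^{1/q}B_{L_q(\ell_1(K))},\]
on writing $f=P_{\omega^\eta}f+Q_{\omega^\eta}f$. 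Iterating this estimate --- using Lemma~\ref{smooth} and the explicit finite description of $\{(\zeta_1,\zeta_2):\zeta_1\oplus\zeta_2=\omega^{1+\eta}r\}$ to handle the Szlenk derivatives of the sums that arise, and bounding $Sz(L_p(C(K^{\omega^\eta})))$ by $\omega^{1+\eta}$ through $A_\eta'$ when $CB(K^{\omega^\eta})\leqslant\omega^\eta$, and by $\omega^{1+\xi}$ through the inner induction otherwise (note $CB(K^{\omega^\eta})<CB(K)$ and $CB(K^{\omega^\eta})\leqslant\omega^{\eta+1}$) --- one produces, for each $\delta$, a finite $m(\delta)$ with $s_{12\delta}^{\omega^{1+\eta}m(\delta)}(B_{L_q(\ell_1(K))})=\varnothing$. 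Thus $Sz_{12\delta}(L_p(C(K)))\leqslant\omega^{1+\eta}m(\delta)<\omega^{1+\eta}\omega=\omega^{1+\xi}$ for every $\delta$, so $Sz(L_p(C(K)))\leqslant\omega^{1+\xi}$; and $Sz(L_p(C_0(K)))\leqslant Sz(L_p(C(K)))$ because $L_p(C_0(K))$ embeds in $L_p(C(K))$. This is $A_\xi'$, hence also $B_\xi'$ and $C_\xi'$, completing the induction.

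The hard part is the iteration in the preceding paragraph. In the recursion the successive radii obey $c_{k+1}^q=c_k^q-\delta^q$, so roughly $\delta^{-q}$ rounds --- each of length $\omega^{1+\eta}$, times a further finite factor coming from $Sz(L_p(C(K^{\omega^\eta})))$ --- are needed before the relevant Szlenk derivative of $B_{L_q(\ell_1(K))}$ is pushed into $B_{L_q(\ell_1(K^{\omega^\eta}))}$ and then annihilated, which is why the bound on $Sz_{12\delta}$ is only a $\delta$-dependent finite multiple of $\omega^{1+\eta}$, the dependence dissolving in the supremum to yield $\omega^{1+\xi}$. Making this rigorous requires care with the Hessenberg-sum decomposition of Szlenk derivatives of sums (Lemma~\ref{smooth}$(ii)$, $(iii)$) and with the scaling of $s_\ee^\zeta$. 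It is here that the form of $C_\eta'$ is essential: $K\setminus K^{\omega^\eta}$ is not closed, and the Cantor--Bendixson index of its closure may be as large as $\omega^{\eta+1}$, so no useful estimate on $B_{L_q(\ell_1(K\setminus K^{\omega^\eta}))}$ by itself is available; $C_\eta'$ controls exactly the norm of the $Q_{\omega^\eta}$-component of the iterated Szlenk derivative --- the only thing the iteration needs --- while the genuinely small piece $K^{\omega^\eta}$ is treated separately by the inductive hypothesis.
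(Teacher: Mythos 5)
Your proposal is correct and follows essentially the same route as the paper: the implications $A_\xi'\Rightarrow B_\xi'\Rightarrow C_\xi'$, and then the hard step of deducing $A_{\eta+1}'$ from $C_\eta'$ by splitting $f=P_{\omega^\eta}f+Q_{\omega^\eta}f$, iterating via Lemma~\ref{smooth} and the scaled form of $C_\eta'$, controlling the ball over $K^{\omega^\eta}$ by an inner induction together with a compactness argument producing a finite $r$, and annihilating after finitely many rounds. The only differences are organizational --- an inner transfinite induction directly on $CB(K)$ instead of the paper's minimal-counterexample induction on $n$ with $CB(K)\leqslant\omega^\zeta n+1$, and a subtractive rather than multiplicative radius recursion --- neither of which changes the argument.
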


The proofs of Theorems \ref{main} and \ref{main thing} are nearly identical.  We will prove Theorem \ref{main thing}.  In order to prove Theorem \ref{main}, one simply runs the same proof replacing Lemma \ref{lemma1}$(ii)$ and Proposition \ref{barry}$(ii)$, $(iv)$, and $(vi)$ with Lemma \ref{lemma1}$(i)$ and Proposition \ref{barry}$(i)$, $(iii)$, and $(v)$, respectively.  This step of the proof is where our methods necessarily diverge from those used to prove upper estimates for $Sz(C(K))$ and $Sz(L_p(C(K)))$ when $K$ is countable. The only part of the proof which requires work will be to deduce $A_{\xi+1}$ from $C_\xi$.   Given $C_\xi$, it is easy to deduce that if $CB(K)= \omega^\xi+1$, then $Sz(C(K))\leqslant \omega^{\xi+1}$, which is a particular case of $A_{\xi+1}$.  It follows from Bessaga and  Pe\l czy\'{n}ski's isomorphic classification of $C(K)$ spaces for countable $K$ that for two countable, compact, Hausdorff, infinite spaces $K,L$, $C(K)$ is isomorphic to $C(L)$ if and only if $\Gamma(K)=\Gamma(L)$. Therefore for countable $K$, in order to deduce $A_{\xi+1}$ from $C_\xi$, it is sufficient to only consider the special case that $CB(K)=\omega^\xi+1$.  However, this isomorphic classification fails for uncountable compact, Hausdorff spaces, and even for uncountable intervals of ordinals.  More specifically, $[0, \omega_1]$ and $[0, \omega_1 2]$ have the same Cantor-Bendixson index, $\omega_1+1$, while the spaces $C([0, \omega_1])$ and $C([0, \omega_1 2])$ are not isomorphic \cite{Semadeni}.  

\begin{proof}[Proof of Theorem \ref{main thing}] We first prove that $A_\xi'\Rightarrow B_\xi'\Rightarrow C_\xi'$.   Assume $A_\xi'$, $K$ is compact, Hausdorff, $\gamma<\omega^\xi$ and $\gamma<CB(K)$.   Then if $\Phi_\gamma:L_p(C_{K^\gamma}(K))\to L_p( C(K))$ is the inclusion, $$Sz(\Phi_\gamma)=Sz(L_p(C_{K^\gamma}(K)))=Sz(L_p(C_0(K/K^\gamma))),$$ and $CB(K/K^\gamma)=\gamma+1\leqslant \omega^\xi$.  By $A_\xi'$, $Sz(\Phi_\gamma)=Sz(L_p(C_0(K/K^\gamma)))\leqslant \omega^{1+\xi}$.  Thus $A_\xi'\Rightarrow B_\xi'$.  

Next, assume $B_\xi'$ holds.  Assume $f\in s^{\omega^{1+\xi}}_{3\ee}(B_{L_q(\ell_1(K))})$ and, to obtain a contradiction, assume $\|\Phi^*_{\omega^\xi} f\|^q>1-\ee^q$.    If $\xi=0$, then by Proposition \ref{barry}$(iv)$, there exists a finite set $F$ of isolated points such that $\|\Phi^*_{K\setminus F} f\|^q>1-\ee^q$.    If $\xi>0$, by Proposition \ref{barry}$(vi)$, there exists $\gamma<\omega^\xi$ such that $\|\Phi^*_\gamma f\|^q>1-\ee^q$.  Then by Lemma \ref{lemma1}, $\Phi^*_{K\setminus F} f\in s_\ee^\omega(\Phi^*_{K\setminus F} B_{L_q(\ell_1(K))})$ if $\xi=0$, and $\Phi^*_\gamma f\in s_\ee^{\omega^{1+\xi}}(\Phi^*_\gamma B_{L_q(\ell_1(K))})$ if $\xi>0$.   In the case that $\xi=0$, we obtain a contradiction to Proposition \ref{barry}$(ii)$, since in this case $s_\ee^\omega(\Phi^*_{K\setminus F} B_{L_q(\ell_1(K))})=\varnothing$.  In the case that $\xi>0$, we obtain a contradiction to $B_\xi'$, since $Sz(\Phi_\gamma)\leqslant \omega^{1+\xi}$, so $s_\ee^{\omega^{1+\xi}}(\Phi^*_\gamma B_{L_q(\ell_1(K))})=\varnothing$. In either case, the contradiction yields that $\|Q_{K^{\omega^\xi}} f\|=\|\Phi^*_{\omega^\xi} f\|\leqslant (1-\ee^q)^{1/q}$.  Then $$\Phi^*_{\omega^\xi} f= \Phi^*_{\omega^\xi}Q_{K^{\omega^\xi}}f\in (1-\ee^q)^{1/q} \Phi^*_{\omega^\xi} B_{L_q(\ell_1(K))}.$$ Thus $A_\xi'\Rightarrow B_\xi'\Rightarrow C_\xi'$.

We remark that $C'_\xi$ is equivalent to: For any compact, Hausdorff $K$ such that $\omega^\xi<CB(K)$, any $\ee\in (0,1)$, and any $a\in (0,1]$,  $$\Phi^*_{\omega^\xi}(s^{\omega^{1+\xi}}_{3\ee}(a B_{L_q(\ell_1(K))}))\subset a(1-\ee^q)^{1/q}\Phi^*_{\omega^\xi} B_{L_q(\ell_1(K))} .$$ Indeed, by homogeneity, if $X$ is any Banach space, $L\subset X^*$ is $w^*$-compact,  $\ee>0$, $a\in (0,1]$, and $\xi$ is any ordinal, $s^\xi_\ee(a L)=a s^\xi_{\ee/a}(L)\subset a s^\xi_\ee(L)$.  We apply this with $L=B_{L_q(\ell_1(K))}$ to deduce that $$\Phi^*_{\omega^\xi}(s^{\omega^{1+\xi}}_{3\ee}(a B_{L_q(\ell_1(K))}))= a\Phi^*_{\omega^\xi}(s^{\omega^{1+\xi}}_{3\ee/a}( B_{L_q(\ell_1(K))}))\subset a(1-\ee^q)^{1/q}\Phi^*_{\omega^\xi} B_{L_q(\ell_1(K))} .$$

We turn now to the proof of the theorem.  Since $A_\xi'\Rightarrow B_\xi'\Rightarrow C_\xi'$, it is sufficient to prove that for any ordinal $\xi$, $A_\xi'$ holds given $A_\zeta', B'_\zeta$, $C'_\zeta$ for every $\zeta<\xi$.  Seeking a contradiction, assume there exists an ordinal $\xi$ such that $A_\xi'$ fails, and assume that $\xi$ is the minimum such ordinal.  We note that $\xi>0$, since $A_0'$ is true.  Indeed, if $CB(K)\leqslant \omega^0=1$, $K$ is finite, whence $L_p(C(K))\approx L_p$ and $Sz(L_p)=\omega$.   

Assume $\xi$ is a limit ordinal.  Fix a compact, Hausdorff space $K$ with $CB(K)\leqslant \omega^\xi$.   Since $CB(K)$ cannot be a limit ordinal, $CB(K)<\omega^\xi$.  Since $\xi$ is a limit ordinal, there exists $\zeta<\xi$ such that $CB(K)<\omega^\zeta$.  Since $A_\zeta$ holds, we deduce that $Sz(L_p(C(K)))\leqslant \omega^{1+\zeta}< \omega^{1+\xi}$, a contradiction.   Thus $\xi$ cannot be a limit ordinal.   

It follows that $\xi$ must be a successor, say $\xi=\zeta+1$.   Since $\omega^\xi$ is a limit ordinal, $CB(K)=\omega^\xi$ is impossible for any compact, Hausdorff $K$, so it follows that there exists some compact, Hausdorff $K$ such that $CB(K)<\omega^\xi$ and such that the statement of $A_\xi'$ fails for this $K$.  Let $n$ be the minimum natural number such that there exists a compact, Hausdorff space $K$ with $CB(K)\leqslant \omega^\zeta n+1$ and such that the $A_\xi'$ fails for this $K$. First suppose that $n=1$.  Then $CB(K)\leqslant \omega^\zeta+1$.  In this case it must be that $CB(K)=\omega^\zeta+1$, since if $CB(K)\leqslant \omega^\zeta$, we would deduce that $Sz(L_p(C(K)))\leqslant \omega^{1+\zeta}<\omega^{1+\xi}$ by $A_\zeta'$.   Since $K$ is infinite, it must be that $Sz(L_p(C_0(K)))=Sz(L_p(C(K)))$, so we compute $Sz(L_p(C_0(K)))$.   Note that since $K_\infty=K^{\omega^\zeta}$, $\Phi_{\omega^\zeta}$ is the inculsion of $L_p(C_0(K))$ into $L_p(C(K))$.   By $C_\zeta'$, for any $\ee\in (0,1)$, $$\Phi^*_{\omega^\zeta} s_{3\ee}^{\omega^{1+\zeta}}(B_{L_q(\ell_1(K))})\subset (1-\ee^q)^{1/q}\Phi^*_{\omega^\zeta} B_{L_q(\ell_1(K))}.$$ By \cite[Lemma $2.5$]{BrookerAsplund}, $$s_{6\ee}^{\omega^{1+\zeta}}(\Phi^*_{\omega^\zeta} B_{L_q(\ell_1(K))}) \subset \Phi^*_{\omega^\zeta} s_{3\ee}^{\omega^{1+\zeta}}(B_{L_q(\ell_1(K))}).$$    These inclusions together with a standard homogeneity argument yield that for any $j\in \nn$,  and $\ee\in (0,1)$, $$s_{6\ee}^{\omega^{1+\zeta} j }(\Phi^*_{\omega^\zeta} B_{L_q(\ell_1(K))})\subset (1-\ee^q)^{j/q} \Phi^*_{\omega^\zeta} B_{L_q(\ell_1(K))},$$ from which it follows that $Sz_{6\ee}(\Phi^*_{\omega^\zeta} B_{L_q(\ell_1(K))})<\omega^{1+\zeta}\omega=\omega^{1+\xi}$.  This shows that $Sz(\Phi_{\omega^\zeta})\leqslant \omega^{1+\xi}$. Since $Sz(\Phi_{\omega^\zeta})=Sz(L_p(C_0(K)))=Sz(L_p(C(K)))$, we reach a contradiction if $n=1$. 

Since it must be that $n>1$, we may write $n=m+1$. Let $F=K^{\omega^\zeta}$  and note that $CB(F)\leqslant \omega^\zeta m+1$. Indeed, $$F^{\omega^\zeta m+1} = (K^{\omega^\zeta})^{\omega^\zeta m+1}= K^{\omega^\zeta+\omega^\zeta m+1}= K^{\omega^\zeta n+1}=\varnothing.$$     Let $L=B_{L_p(C(F))^*}=B_{L_q(\ell_1(F))}\subset B_{L_q(\ell_1(K))}$. Recall that viewing $B_{L_q(\ell_1(F))}$ as the unit ball of $L_p(C(F))^*$ as well as the subset of $B_{L_q(\ell_1(K))}$ consisting of measures supported on $F$ is a $w^*$-$w^*$-continuous linear isometry, so that $L\subset B_{L_q(\ell_1(K))}$ is $w^*$-compact and $Sz_{3\ee}(L)=Sz_{3\ee}(B_{L_q(\ell_1(F))})=Sz_{3\ee}(B_{L_p(C(F))^*})\leqslant \omega^{1+\xi}$ by the minimality of $n$ and the fact that $CB(F)\leqslant \omega^\zeta m+1$. But the usual compactness argument yields that if $Sz_{3\ee}(L)\leqslant \omega^{1+\xi}$, then $Sz_{3\ee}(L)<\omega^{1+\xi}=\omega^{1+\zeta}\omega$, and there exists some $r\in \nn$ such that $Sz_{3\ee}(L)<\omega^{1+\zeta} r$. We claim that for $i=0, 1, \ldots$ and $f\in s_{12\ee}^{\omega^{1+\zeta}ri} (B_{L_q(\ell_1(K))})$, $\|\Phi^*_F f\|\leqslant (1-\ee^q)^{i/q}$.   The $i=0$ case is obvious.  Assume we have the result for some $i$.  Let $A=(1-\ee^q)^{i/q}B_{L_q(\ell_1(K))}$.  Note that our assumption on $i$ yields that $$s^{\omega^{1+\zeta}ri}_{12\ee}(B_{L_q(\ell_1(K))})=\{P_F f+Q_F f: f\in s^{\omega^{1+\zeta}ri}_{12\ee}(B_{L_q(\ell_1(K))})\}  \subset L+A.$$  Here we are using the fact that $\|P_F\|\leqslant 1$, so that $P_F$ maps $B_{L_q(\ell_1(K))}$ into $L$.      By Lemma \ref{smooth}, $$s^{\omega^{1+\zeta} r(i+1)}_{12\ee}(B_{L_q(\ell_1(K))}) \subset s^{\omega^{1+\zeta}}_{12\ee}\Bigl(s^{\omega^{1+\zeta}ri}_{12\ee}(B_{L_q(\ell_1(K))})\Bigr)\subset s^{\omega^{1+\zeta}}_{12\ee}(L+A)\subset L+s^{\omega^{1+\zeta}}_{3\ee}(A).$$   Using the equivalent version of $C_\zeta'$ given above with $a=(1-\ee^q)^{i/q}$, \begin{align*} \Phi^*_{\omega^\zeta} s^{\omega^{1+\zeta}}_{3\ee}(A) & = \Phi^*_{\omega^\zeta} s^{\omega^{1+\zeta}}_{3\ee}(a B_{L_q(\ell_1(K))})\subset a(1-\ee^q)^{1/q}\Phi^*_{\omega^\zeta} B_{L_q(\ell_1(K))} \\ & =(1-\ee^q)^{\frac{i+1}{q}}\Phi^*_{\omega^\zeta} B_{L_q(\ell_1(K))}.\end{align*} Fix $f\in s^{\omega^{1+\zeta} r(i+1)}_{12\ee}(B_{L_q(\ell_1(K))})\subset L+s^{\omega^{1+\zeta}}_{3\ee}(A)$ and write $f=g+h$ with $g\in L$ and $h\in s^{\omega^{1+\zeta}}_{3\ee}(A)$.  Then since $\Phi^*_F|L\equiv 0$, $\Phi^*_F g=0$.  This fact together with our inclusion above yields that $\|\Phi^*_F f\|= \|\Phi^*_F h\| \leqslant (1-\ee^q)^{\frac{i+1}{q}}.$    This yields the inductive claim on $i$.

 Fix $j\in \nn$ such that $(1-\ee^q)^{j/q}<\ee$.  For any $f\in s^{\omega^{1+\zeta} rj}_{12\ee}(B_{L_q(\ell_1(K))})$, $f=P_{\omega^\zeta} f+Q_{\omega^\zeta} f$, $P_{\omega^\zeta} f\in L$, and $$\|Q_{\omega^\zeta} f\|= \|\Phi^*_{\omega^\zeta} f\|\leqslant (1-\ee^q)^{j/q}<\ee.$$  Therefore $$s^{\omega^{1+\zeta} rj}_{12\ee}(B_{L_q(\ell_1(K))})  \subset L+ \ee B_{L_q(\ell_1(K))}.$$ Using Proposition \ref{szlenk facts}$(vi)$,  \begin{align*} s^{\omega^{1+\zeta}r(j+1)}_{12\ee}(B_{L_q(\ell_1(K))}) & \subset s_{12\ee}^{\omega^{1+\zeta}r}\bigl(s^{\omega^{1+\zeta} rj}_{12\ee}(B_{L_q(\ell_1(K))})\bigr)\subset s_{12\ee}^{\omega^{1+\zeta}r}(L+\ee B_{L_q(\ell_1(K))}) \\ & \subset s_{3\ee}^{\omega^{1+\zeta}r}(L)+\ee B_{L_q(\ell_1(K))}=\varnothing.\end{align*}  But this shows that $Sz_{12\ee}(B_{L_q(\ell_1(K))})<\omega^{1+\zeta}r(j+1)<\omega^{1+\xi}$.  Since $\ee\in (0,1)$ was arbitrary, we deduce that $Sz(B_{L_q(\ell_1(K))})\leqslant \omega^{1+\xi}$, and this contradiction finishes the proof.

\end{proof}

\section{The lower estimates}

\begin{lemma} Let $K$ be compact, Hausdorff and fix $\ee\in (0,1)$.  For any ordinal $\xi<CB(K)$, $B_{\ell_1(K^\xi)}\subset s_\ee^\xi(B_{\ell_1(K)})\cap d_\ee^{\omega\xi}(B_{\ell_1(K)})$.  

\label{lower est}
\end{lemma}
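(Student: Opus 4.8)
The plan is to prove the two inclusions $B_{\ell_1(K^\xi)}\subseteq s_\ee^\xi(B_{\ell_1(K)})$ and $B_{\ell_1(K^\xi)}\subseteq d_\ee^{\omega\xi}(B_{\ell_1(K)})$ separately, in each case by transfinite induction on $\xi<CB(K)$, using the identification already set up in Section 2 of $B_{\ell_1(K^\xi)}$ with the $w^*$-compact set of those $\mu\in B_{\ell_1(K)}$ with $|\mu|(K\setminus K^\xi)=0$. In both inductions the case $\xi=0$ is trivial, and the limit case is immediate: if $\xi$ is a limit ordinal then $K^\xi=\bigcap_{\zeta<\xi}K^\zeta$, while $s_\ee^\xi(B_{\ell_1(K)})=\bigcap_{\zeta<\xi}s_\ee^\zeta(B_{\ell_1(K)})$ and, since $\{\omega\zeta:\zeta<\xi\}$ is cofinal in $\omega\xi$, also $d_\ee^{\omega\xi}(B_{\ell_1(K)})=\bigcap_{\zeta<\xi}d_\ee^{\omega\zeta}(B_{\ell_1(K)})$; so a measure supported on $K^\xi$ is supported on every $K^\zeta$, $\zeta<\xi$, and the inductive hypotheses apply. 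Thus everything reduces to the successor step.

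For the Szlenk inclusion the successor step (for a target ordinal $\xi+1<CB(K)$, so that $K^{\xi+1}\neq\varnothing$ and hence $K^\xi$ is infinite) comes down to the single-derivative assertion that every norm-$\leqslant1$ measure supported on $M^1$ lies in $s_\ee(B_{\ell_1(M)})$, applied with $M=K^\xi$. Indeed, the inductive hypothesis places every norm-$\leqslant1$ measure supported on $K^\xi$ in $s_\ee^\xi(B_{\ell_1(K)})$ (in particular $\mu$ itself, being supported on $K^{\xi+1}\subseteq K^\xi$), so it suffices to produce a net $(\mu_\lambda)$ of norm-$\leqslant1$ measures supported on $K^\xi$ with $\mu_\lambda\to\mu$ weak$^*$ and $\|\mu_\lambda-\mu\|>\ee$: then every $w^*$-neighbourhood of $\mu$ eventually contains $\mu$ and $\mu_\lambda$, both in $s_\ee^\xi(B_{\ell_1(K)})$, forcing $\mu\in s_\ee(s_\ee^\xi(B_{\ell_1(K)}))=s_\ee^{\xi+1}(B_{\ell_1(K)})$. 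To build the net, write $\mu=\sum_i a_i\delta_{t_i}$ with the $t_i\in K^{\xi+1}$ distinct, fix $t_0\in K^{\xi+1}=(K^\xi)'$ and a finite set $I$ of atoms of $\mu$ with $\sum_{i\in I}|a_i|$ close to $\|\mu\|$, choose nets $s_{i,\lambda}\to t_i$ in $K^\xi\setminus\{t_i\}$ (for $i\in I$) and $s_{0,\lambda}\to t_0$ in $K^\xi\setminus\{t_0\}$ on a common directed set, and put $\mu_\lambda=\mu-\sum_{i\in I}a_i\delta_{t_i}+\sum_{i\in I}a_i\delta_{s_{i,\lambda}}+c(\delta_{s_{0,\lambda}}-\delta_{t_0})$ for a scalar $c$ with $|c|\leqslant(1-\|\mu\|)/2$ (if $t_0$ is one of the atoms in $I$, take $c$ with the same argument as that atom). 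Then $\mu_\lambda-\mu$ is a finite sum of dipoles $c'(\delta_{s}-\delta_{t})$ with $s\to t$, so $\mu_\lambda\to\mu$ weak$^*$; moving atoms does not increase the total variation and the dipole adds at most $2|c|$, so $\|\mu_\lambda\|\leqslant\|\mu\|+(1-\|\mu\|)=1$; and for $\lambda$ eventually all the supporting points that matter are pairwise distinct (the $t_i$, $i\in I$, are distinct and the $s_{i,\lambda}$ lie in disjoint neighbourhoods of them), so $\|\mu_\lambda-\mu\|\geqslant2\sum_{i\in I}|a_i|+2|c|$, whose supremum over admissible $I,c$ is $2\|\mu\|+(1-\|\mu\|)=1+\|\mu\|>\ee$. (The degenerate cases $\mu=0$, $\|\mu\|=1$, or $K^{\xi+1}$ finite are dealt with by dropping the appropriate term.)

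For the $w^*$-dentability inclusion I would first note that $d_\ee^\alpha(B_{\ell_1(K)})$ is convex (Section 1), symmetric (since $\mu\mapsto-\mu$ carries slices to slices), and $w^*$-closed, hence norm-closed; consequently $B_{\ell_1(F)}\subseteq d_\ee^\alpha(B_{\ell_1(K)})$ follows from $\delta_t\in d_\ee^\alpha(B_{\ell_1(K)})$ for all $t\in F$, because $B_{\ell_1(F)}$ is the norm-closed convex symmetric hull of $\{\delta_t:t\in F\}$. Using $d_\ee^{\omega(\xi+1)}=d_\ee^{\omega}\circ d_\ee^{\omega\xi}$, monotonicity of $d_\ee$, and the inductive hypothesis $B_{\ell_1(K^\xi)}\subseteq d_\ee^{\omega\xi}(B_{\ell_1(K)})$, the successor step reduces to: for $M=K^\xi$ (infinite, scattered) and $t\in M^1$, one has $\delta_t\in d_\ee^{\omega}(B_{\ell_1(M)})=\bigcap_{n<\omega}d_\ee^n(B_{\ell_1(M)})$. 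I would prove $\delta_t\in d_\ee^n(B_{\ell_1(M)})$ for all $n$ by a secondary induction on $n$. If $t\in M^2$ this is easy: a slice through $\delta_t$ contains $\delta_s$ for a net of distinct $s\in M^1\setminus\{t\}$ converging to $t$, and these lie in $d_\ee^{n-1}(B_{\ell_1(M)})$ by the convexity reduction and the level-$(n-1)$ hypothesis, giving diameter $2>\ee$. The delicate case is $t\in M^1\setminus M^2$: then $t$ is isolated in $M^1$, so near $t$ the space $M$ looks like $[0,\omega]$ ($t$ together with isolated points of $M$ accumulating at it), and the Diracs of those isolated points are not available at higher levels. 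Here one must carry through the secondary induction the stronger assertion that $d_\ee^n(B_{\ell_1(M)})$ contains suitable ``spread bumps'' at points of $M^1$, that is, measures $(1-\eta)\mu+\tfrac{\eta}{j}\sum_{i=1}^{j}\delta_{s_i}$ with $\mu\in B_{\ell_1(M^1)}$, $\eta\in[0,1]$, $s_1,\dots,s_j$ distinct isolated points of $M$ clustering near a point of $M^1$, and $j$ large relative to $n$ and $1/\ee$ (so each $\delta_{s_i}$ carries only the small mass $\eta/j$): the only ``dangerous'' slices through such a bump are those nearly supported on a single $\delta_{s_i}$, and those now have diameter close to $2(1-\eta/j)$ inside $d_\ee^{n-1}(B_{\ell_1(M)})$, so choosing $j$ large at each step keeps the bump alive through all finitely many derivatives; taking a net of such bumps converging $w^*$ to $\delta_t$ and staying at norm-distance $>\ee$ from it then yields $\delta_t\in d_\ee^n(B_{\ell_1(M)})$ for every $n$. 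I expect the main obstacle to be exactly this last point — fixing the precise form of the ``spread bump'' hypothesis so the secondary induction closes — and this is also precisely where the factor $\omega$ in $\omega\xi$, as opposed to a single derivative, is forced.
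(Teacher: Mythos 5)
Your Szlenk half is correct and is essentially the argument of the paper: at the successor step the paper also perturbs a measure supported on $K^{\xi+1}$ into nearby measures supported on $K^\xi$ (it works inside an arbitrary basic $w^*$-neighborhood determined by $f_1,\dots,f_n$ and pads with an auxiliary measure in $\bigcap_i\ker f_i$ of mass $1-\|\eta\|$ to force norm distance $\geqslant 1>\ee$, then uses density of finitely supported measures and $w^*$-closedness of $s_\ee^{\xi+1}$; your net-plus-dipole version accomplishes the same thing). Your limit cases are fine, and your reduction of the dentability half — monotonicity of $d_\ee$, the Section 2 identification of $B_{\ell_1(K^\xi)}$ with its image in $\ell_1(K)$, reducing to showing $a\delta_t\in d_\ee^{\omega\xi+n}(B_{\ell_1(K)})$ for every $n<\omega$ and $t\in K^{\xi+1}$, and then taking the $w^*$-closed convex hull using convexity of the $d$-derived sets — is also exactly the paper's frame.

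The genuine gap is the core of the dentability half: you never prove that for $t\in (K^\xi)'$ the Dirac $\delta_t$ survives every finite number of further dentability derivations, and you acknowledge as much. As written, the ``spread bump'' induction does not close: the dichotomy that ``the only dangerous slices are those nearly supported on a single $\delta_{s_i}$'' is not established, and the asserted diameter $\approx 2(1-\eta/j)$ inside $d_\ee^{n-1}(B_{\ell_1(M)})$ presupposes that the modified bumps already belong to $d_\ee^{n-1}$, which is precisely what the unfinished secondary induction was supposed to supply; moreover restricting the small atoms to isolated points of $M$ is an unnecessary complication. The paper closes this step with a cleaner device that avoids any case analysis on slices: let $A_n$ be the set of averages $2^{-n}\sum_{t\in F}\ee_t\delta_t$ over $2^n$-point subsets $F\subset K^\xi$ with unimodular signs. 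Each $\mu\in A_n$ is the midpoint of two elements $\mu_1,\mu_2\in A_{n-1}$ with disjoint supports, so $\|\mu-\mu_1\|=\|\mu-\mu_2\|=1>\ee$, and any $w^*$-open slice containing $\mu$ contains $\mu_1$ or $\mu_2$ by linearity; starting from the outer induction hypothesis $A_n\subset B_{\ell_1(K^\xi)}\subset d_\ee^{\omega\xi}(B_{\ell_1(K)})$, induction on $m$ gives $A_n\subset d_\ee^{\omega\xi+m}(B_{\ell_1(K)})$ whenever $n\geqslant m$. Since $a\delta_x$ ($x\in K^{\xi+1}$, $|a|=1$) is a $w^*$-limit of elements of $A_n$ and the derived sets are $w^*$-closed, $a\delta_x\in d_\ee^{\omega\xi+n}(B_{\ell_1(K)})$ for every $n$, which is the statement your sketch leaves unproved. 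If you replace your bump hypothesis with this equal-weight, midpoint-splitting family, the rest of your outline goes through.
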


\begin{proof}We prove the results separately for the Szlenk and $w^*$-dentability indices, each by induction.  We first prove that $B_{\ell_1(K^\xi)}\subset s_\ee^\xi(B_{\ell_1(K)})$.  The base and limit ordinal cases are clear.  Assume $K^{\xi+1}\neq \varnothing$ and $B_{\ell_1(K^\xi)}\subset s_\ee^\xi(B_{\ell_1(K)})$.   Fix any finite, non-empty subset $F$ of $K^{\xi+1}$ and scalars $(a_x)_{x\in F}$ such that $\sum_{x\in F}|a_x|\leqslant 1$.  Let $\mu=\sum_{x\in F}a_x\delta_x$ and let $V$ be any $w^*$-neighborhood of $\mu$.   Fix $f_1, \ldots, f_n\in C(K)$, and $\sigma>0$ such that $$\{\eta\in \ell_1(K): (\forall 1\leqslant i\leqslant n)(|\langle \mu, f_i\rangle- \langle \eta, f_i\rangle|<\sigma)\}\subset V.$$     For each $x\in F$, fix a neighborhood $U_x$ of $x$ such that for each $1\leqslant i\leqslant n$ and each $y\in U_x$, $|f_i(x)-f_i(y)|<\sigma$.   We may assume that the sets $(U_x)_{x\in F}$ are pairwise disjoint. Since the points $x\in F$ are not isolated in $K^\xi$, for each $x\in F$ we may fix $y_x\in U_x\cap K^\xi$ such that $y_x\neq x$. Let $E=\{y_x: x\in F\}$. Let $\eta=\sum_{x\in F} a_x \delta_{y_x}$ and note that $\eta\in V\cap B_{\ell_1(K^\xi)}\subset V\cap s_\ee^\xi(B_{\ell_1(K)})$. Since $K^{\xi+1}\neq \varnothing$, $K^\xi$ is infinite, and we may find some $\eta'\in \text{span}\{\delta_t: t\in K^\xi\setminus (E\cup F)\}\cap_{i=1}^n \ker(f_i)$.  By scaling, we may assume $\|\eta'\|=1-\|\eta\|$.   Then $\eta+\eta'\in V\cap B_{\ell_1(K^\xi)}$ and $$\|\mu-(\eta+\eta')\|=\|\mu\|+\|\eta\|+\|\eta'\|\geqslant 1>\ee.$$  Since $V$ was arbitrary, we deduce that $\mu\in s_\ee^{\xi+1}(B_{\ell_1(K)})$.  This shows that the members of $B_{\ell_1(K^\xi)}$ with finite support lie in $s_\ee^{\xi+1}(B_{\ell_1(K)})$.  Since such measures are dense in $B_{\ell_1(K^\xi)}$ and since $s_\ee^{\xi+1}(B_{\ell_1(K)})$ is $w^*$-closed, we deduce the successor case.

We next prove the statement concerning the $w^*$-dentability index, also by induction.  Again, the base and limit cases are trivia. Assume $K^{\xi+1}\neq \varnothing$ and assume $B_{\ell_1(K^\xi)}\subset d^{\omega \xi}_\ee(B_{\ell_1(K)})$.    For each $n=0, 1, \ldots$, let $$A_n=\Bigl\{\frac{1}{2^n}\sum_{t\in F}\ee_t\delta_t: |\ee_t|=1, F\subset K^\xi, |F|=2^n\Bigr\}\subset B_{\ell_1(K^\xi)}\subset d_\ee^{\omega\xi}(B_{\ell_1(K)}).$$   We claim that for every $m=0, 1, \ldots$ and every $n\geqslant m$, $A_n\subset d_\ee^{\omega\xi+m}(B_{\ell_1(K)})$.   The inductive hypothesis yields the result for each $n$ when $m=0$.  Assume that for some $m$ and every $n\geqslant m$, $A_n\subset d_\ee^{\omega\xi+m}(B_{\ell_1(K)})$.    Fix some $n\geqslant m+1$ and some $\mu=\frac{1}{2^n}\sum_{t\in F}\ee_t\delta_t\in A_n$.   Let $F_1, F_2$ be a partition of $F$ with $|F_1|=|F_2|=2^{n-1}$.  Then $\mu_1:=\frac{1}{2^{n-1}}\sum_{t\in F_1}\ee_t\delta_t$, $\mu_2:=\frac{1}{2^{n-1}}\sum_{t\in F_2}\ee_t\delta_t\in A_{n-1}\subset d_\ee^{\omega\xi+m}(B_{\ell_1(K)})$.    Moreover, $\mu=\frac{1}{2}\mu_1+\frac{1}{2}\mu_2$.  Let $S$ be any $w^*$-open slice containing $\mu$, and note that this slice must contain either $\mu_1$ or $\mu_2$ by convexity.  But $$\|\mu-\mu_1\|=\|\mu-\mu_2\|=\frac{1}{2}\|\mu_1-\mu_2\|=1>\ee,$$ so that $\text{diam}(S\cap d_\ee^{\omega \xi+m}(B_{\ell_1(K)}))>\ee$.  From this it follows that $\mu\in d^{\omega\xi +m+1}_\ee(B_{\ell_1(K)})$.   This yields the claim concerning $A_n$.

Next, we note that for any $x\in K^{\xi+1}$, any unimodular scalar $a$, and any $n\in \nn$, $a\delta_x\in \overline{A}_n^{w^*}$.  Indeed, fix $f_1, \ldots, f_k\in C(K)$ and $\eta>0$.  Fix a neighborhood $U$ of $x$ such that for every $y\in U$ and $1\leqslant i\leqslant k$, $|f_i(y)-f_i(x)|<\eta$.  Fix any finite subset $F\subset K^\xi$ of $U$ with $|F|=2^n$, as we may, since $x$ is not isolated in $K^\xi$.  Then if $\mu=\frac{1}{2^n}\sum_{t\in F}a\delta_t\in A_n$, $|\langle f_i, a\delta_x\rangle-\langle f_i, \mu\rangle|<\eta$ for $i=1, \ldots, k$.   This yields that $a\delta_x\in \overline{A}^{w^*}_n\subset \overline{d^{\omega\xi+n}_\ee(B_{\ell_1(K)})}^{w^*}= d^{\omega\xi+n}_\ee(B_{\ell_1(K)})$.   From this we deduce that $a\delta_x\in \cap_{n<\omega}\delta_\ee^{\omega\xi+n}(B_{\ell_1(K)})=d^{\omega\xi+\omega}_\ee(B_{\ell_1(K)})=d^{\omega(\xi+1)}_\ee(B_{\ell_1(K)})$. Since $d^{\omega(\xi+1)}_\ee(B_{\ell_1(K)})$ is $w^*$-closed and convex, it follows that $$B_{\ell_1(K^{\xi+1})} = \overline{\text{co}}^{w^*}\{a \delta_x:|a|=1, x\in K^{\xi+1}\}\subset d_\ee^{\omega(\xi+1)}(B_{\ell_1(K)}).$$

\end{proof}

\begin{corollary} For any compact, Hausdorff space $K$ and any $1<p<\infty$,  $Sz(C(K))=\Gamma(K)$ and $Dz(C(K))=Sz(L_p(C(K)))=\omega \Gamma(K)$. 

\end{corollary}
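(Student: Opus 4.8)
The plan is to combine the upper estimates of Theorems \ref{main} and \ref{main thing} with the lower estimates of Lemma \ref{lower est}, and then to close the resulting gap using the structural fact (Proposition \ref{szlenk facts}$(v)$) that the Szlenk and $w^*$-dentability indices of an Asplund space are powers of $\omega$.

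I would first handle the degenerate cases. If $K$ is not scattered, then $K$ maps continuously onto $[0,1]$ (its perfect kernel contains a Cantor set, and a surjection off the latter extends to $K$ by Tietze), so $C[0,1]$ embeds isometrically into $C(K)$, and into $L_p(C(K))$ via the constant functions; since $C[0,1]$ is separable with non-separable dual, neither $C(K)$ nor $L_p(C(K))$ is Asplund, so $Sz(C(K))=Dz(C(K))=Sz(L_p(C(K)))=\infty=\Gamma(K)=\omega\Gamma(K)$ by Proposition \ref{szlenk facts}$(iv)$. If $K$ is finite and non-empty, then $C(K)$ is finite-dimensional and $L_p(C(K))\cong L_p$, so $Sz(C(K))=1=\Gamma(K)$, $Dz(C(K))=\omega$ by Proposition \ref{szlenk facts}$(iii)$, and $Sz(L_p(C(K)))=Sz(L_p)=\omega=\omega\Gamma(K)$.

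So assume $K$ is infinite and scattered; then $CB(K)\geqslant 2$, and, writing $\Gamma(K)=\omega^\xi$ with $\xi$ least such that $CB(K)\leqslant\omega^\xi$, we have $\xi\geqslant 1$; moreover $C(K)$, hence $L_p(C(K))$, is Asplund (Namioka--Phelps). The upper bounds are immediate: since $CB(K)\leqslant\omega^\xi$, statement $A_\xi$ of Theorem \ref{main} gives $Sz(C(K))\leqslant\omega^\xi$, statement $A_\xi'$ of Theorem \ref{main thing} gives $Sz(L_p(C(K)))\leqslant\omega^{1+\xi}$, and Proposition \ref{szlenk facts}$(vi)$ then gives $Dz(C(K))\leqslant Sz(L_p(C(K)))\leqslant\omega^{1+\xi}=\omega\Gamma(K)$. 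For the lower bounds, fix $\varepsilon\in(0,1)$ and apply Lemma \ref{lower est} with the ordinal $\zeta=CB(K)-1<CB(K)$ (legitimate since $CB(K)$ is a successor): the set $B_{\ell_1(K^\zeta)}=B_{\ell_1(K_\infty)}$, which is non-empty since $K_\infty\neq\varnothing$, lies in both $s_\varepsilon^\zeta(B_{\ell_1(K)})$ and $d_\varepsilon^{\omega\zeta}(B_{\ell_1(K)})$, so by monotonicity of the derivations $Sz(C(K))\geqslant CB(K)$ and $Dz(C(K))>\omega(CB(K)-1)$. By minimality of $\xi$, for every $\xi'<\xi$ we have $CB(K)>\omega^{\xi'}$, hence $CB(K)-1\geqslant\omega^{\xi'}$; therefore $Sz(C(K))>\omega^{\xi'}$ and $Dz(C(K))>\omega\cdot\omega^{\xi'}=\omega^{1+\xi'}$ for every $\xi'<\xi$.

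It remains to merge the two sides. Writing $Sz(C(K))=\omega^\eta$: from $\omega^\eta>\omega^{\xi'}$ for all $\xi'<\xi$ we get $\eta\geqslant\xi$, so together with the upper bound $Sz(C(K))=\omega^\xi=\Gamma(K)$. Writing $Dz(C(K))=\omega^{\eta'}$: from $\omega^{\eta'}>\omega^{1+\xi'}$ for all $\xi'<\xi$ we get $\eta'\geqslant 1+\xi'+1$ for all $\xi'<\xi$, and since $\sup_{\xi'<\xi}(1+\xi'+1)=1+\xi$ (checked separately for $\xi$ a successor and $\xi$ a limit) this yields $\eta'\geqslant 1+\xi$; with the upper bound, $Dz(C(K))=\omega^{1+\xi}=\omega\Gamma(K)$, and then $\omega^{1+\xi}=Dz(C(K))\leqslant Sz(L_p(C(K)))\leqslant\omega^{1+\xi}$ forces $Sz(L_p(C(K)))=\omega^{1+\xi}=\omega\Gamma(K)$ as well. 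I expect the only genuinely delicate point to be this final merging: Lemma \ref{lower est} by itself only gives $Dz(C(K))>\omega(CB(K)-1)$, which can lie far below $\omega^{1+\xi}$ when $CB(K)=\omega^{\xi'}+1$ for some $\xi'<\xi$, and it is precisely the jump phenomenon of Proposition \ref{szlenk facts}$(v)$ that bridges the difference; the remaining steps are routine.
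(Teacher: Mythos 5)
Your proposal is correct and follows essentially the same route as the paper: upper bounds from Theorems \ref{main} and \ref{main thing} together with $Dz\leqslant Sz(L_p(\cdot))$, lower bounds from Lemma \ref{lower est} applied at $\zeta=CB(K)-1$, and the gap closed by the fact that $Sz$ and $Dz$ of an Asplund space are powers of $\omega$ (Proposition \ref{szlenk facts}). The only deviations are cosmetic: you treat the non-scattered case via the classical surjection of $K$ onto $[0,1]$ (where the parenthetical should say the perfect kernel \emph{maps onto} a Cantor set rather than contains one) instead of citing Lemma \ref{lower est} directly, and your merging step takes a supremum over $\xi'<\xi$ where the paper uses that $\Gamma(K)=\omega^{\xi+1}$ is a successor power in the nontrivial case.
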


\begin{proof} We will use Proposition \ref{szlenk facts} throughout.  Since $Dz(C(K))\leqslant L_p(C(K))$, it suffices to prove that $\omega\Gamma(K) \leqslant Dz(C(K))$ and $Sz(L_p(C(K)))\leqslant \omega\Gamma(K)$ in order to see that $Dz(C(K))=Sz(L_p(C(K)))=\omega\Gamma(K)$.

 By Lemma \ref{lower est}, if $K$ is not scattered, $C(K)$, and therefore $L_p(C(K))$, is not Asplund.  From this it follows that $$Sz(C(K))=Dz(C(K))=Sz(L_p(C(K)))= \omega\Gamma(K)=\infty.$$

Assume $K$ is scattered.  In this case, $CB(K)$ is an ordinal and there exists an ordinal $\xi$ such that $\Gamma(K)=\omega^\xi$.  By the definition of $\Gamma(K)$, $CB(K)\leqslant \omega^\xi$.  From Theorems \ref{main} and \ref{main thing}, $Sz(C(K))\leqslant \omega^\xi= \Gamma(K)$ and $Sz(L_p(C(K)))\leqslant \omega^{1+\xi}=\omega\Gamma(K)$.

If $CB(K)=1$, $Sz(C(K))=1$ and $Dz(C(K))=\omega$, since $C(K)$ is finite-dimensional and non-zero. This yields the result in the trivial case that $K$ is finite.   Otherwise $CB(K)=\zeta+1$ for some ordinal $\zeta$ and there exists an ordinal $\xi$ such that $\omega^\xi<\zeta+1<\omega^{\xi+1}=\Gamma(K)$.  By Lemma \ref{lower est},  $B_{\ell_1(K^\zeta)}\subset s_{1/2}^\zeta(B_{\ell_1(K)})\cap d_{1/2}^{\omega\zeta}(B_{\ell_1(K)})$, and it follows that $Sz_{1/2}(B_{\ell_1(K)})>\zeta>\omega^\xi$ and $Dz_{1/2}(B_{\ell_1(K)})>\omega\zeta >\omega^{1+\xi}$. Therefore we deduce that $\omega^\xi<Sz(C(K))\leqslant \omega^{\xi+1}$, and since $Sz(C(K))=\omega^\gamma$ for some ordinal $\gamma$, $Sz(C(K))=\omega^{\xi+1}$. We deduce from $\omega^{1+\xi}<Dz(C(K))\leqslant \omega^{1+\xi+1}$ that $Dz(C(K))\geqslant \omega^{1+\xi+1}$ similarly.

\end{proof}

\end{document}